\DeclareMathOperator{\sgn}{sgn}
\DeclareMathOperator{\Tr}{Tr}
\def\d{\;{\rm d}}
\theoremstyle{plain}
\newtheorem{theorem}{Theorem}[section]
\newtheorem{proposition}[theorem]{Proposition}
\newtheorem{lemma}[theorem]{Lemma}
\newtheorem{corollary}[theorem]{Corollary}
\theoremstyle{definition}
\newtheorem{example}[theorem]{Example}
\newtheorem{remark}[theorem]{Remark}
\title{Convergence of the logarithm of the characteristic polynomial of unitary Brownian motion in Sobolev space}
\author{Johannes Forkel\footnote{\texttt{johannes.forkel@maths.ox.ac.uk}, Mathematical Institute, University of Oxford}, Isao Sauzedde\footnote{\texttt{isao.sauzedde@warwick.ac.uk}, Department of Statistics, University of Warwick }}
\date{\today}
\begin{document}

\maketitle

\begin{abstract}
    We prove that the convergence of the real and imaginary parts of the logarithm of the characteristic polynomial of unitary Brownian motion toward Gaussian free fields on the cylinder, as the matrix dimension goes to infinity, holds in certain suitable Sobolev spaces, which we believe to be optimal. This is the natural dynamical analogue of the result for a fixed time by Hughes, Keating and O'Connell \cite{HughesKeatingOConnell}.
    A weak kind of convergence is known since the work of Spohn \cite{Spohn},    which was widely improved recently by Bourgade and Falconet         \cite{BourgadeFalconet}. 
    In the course of this research we also proved a Wick-type identity, which we include in this paper, as it might be of independent interest.
\end{abstract}

\section{Introduction}

As unitary Brownian motion preserves the Haar measure on the unitary group $U(n)$, to many results of Haar distributed unitary matrices there is a corresponding dynamical result for a unitary Brownian motion $U$ at equilibrium. This is in particular the case for some properties of the eigenvalues, whose dynamics have been studied first by Dyson \cite{Dyson}, who computed a stochastic differential equation describing their evolution. In this paper, we intend to achieve such a transition from static to dynamic for the Hughes-Keating-O'Connell theorem on the large $n$ limit of the logarithm $\log p_n$ of the characteristic polynomial.\\

Characteristic polynomials of random matrices are fundamental objects in random matrix theory. They are closely related to the theory of log-correlated fields and to Gaussian multiplicative chaos \cite{HughesKeatingOConnell, BourgadeFalconet, Webb2}. In the case of Haar-distributed matrices from the classical compact groups, there are also remarkable similarities between the statistics of the characteristic polynomial and those of the Riemann zeta function and other number-theoretic $L$-functions, which led to a number of very precise conjectures for those $L$-functions \cite{Conreyetall, FyodorovHiaryKeating, FyodorovKeating, KeatingSnaith, KeatingSnaith2} - see \cite{BaileyKeating2} for a review.\\


The real and imaginary part of the logarithm $\log p_n$ of the characteristic polynomial also enters the wide family of \emph{linear statistics} of the eigenvalues $\lambda_1,\dots, \lambda_n$, that is functions that can be expressed as $ \sum_{i=1}^n f(\lambda_i)$. This family has received much attention already, both in the static \cite{Diaconis, DiaconisSha,Dobler,johansson2020} and dynamical \cite{Spohn, Webb} frameworks. Except for \cite{Diaconis} in the static case, all these papers assume too much regularity on $f$ to be applicable directly to $\log p_n$, for the type of convergence they use is too strong. However, it is still possible to use the result of Spohn \cite{Spohn} to identify the large $n$ limit of $\Re \log p_n$ and $\Im \log p_n$ as Gaussian free fields, and prove a weak type of convergence (see Lemma \ref{lemma:finite dimensionial} below or the remark (i) below Theorem 1 in \cite{Spohn} ). \\


In a recent paper \cite{BourgadeFalconet}, Bourgade and Falconet gave the first dynamical extension of Fisher-Hartwig asymptotics. Those asymptotics allowed them to give a new proof and improvement of Spohn's result. They also used those asymptotics to prove that $|p_n|^\alpha$, for certain $\alpha$ and when properly normalized, converge to a Gaussian multiplicative chaos measure associated to the Gaussian free field $h$ on the cylinder, i.e. informally the exponential of a multiple of $h$.\\ 

The goal of this paper is to specify some Sobolev spaces, which we think to be optimal, in which we prove the convergence of $\Re \log p_n$ and $\Im \log p_n$. \footnote{Several Sobolev spaces are involved because we can improve the regularity with respect to one of the parameters at the cost of sacrificing some regularity with respect to the other parameter.} This is the natural dynamical version of the corresponding stationary result for Haar-distributed unitary matrices by Hughes, Keating and O'Connell \cite{HughesKeatingOConnell}, who proved that for any fixed time the logarithm of the characteristic polynomial converges to a generalized Gaussian field on the unit circle.\\

In the last section, we state and prove an identity that allows to express the second moment of the trace of arbitrary products of a GUE matrix $H$ and an independent CUE matrix $U$ in terms of moments of $U$ only. 
When the dimension $n$ is large enough, the Diaconis-Shahshahani theorem on moments of traces of unitary matrices \cite{DiaconisSha} allows to then compute this new expression explicitly as a polynomial in $n$.

\subsection{Context}
We let $U_n:[0,\infty) \rightarrow U(n)$ be a unitary Brownian motion started from Haar measure (for a precise definition see Section \ref{section:UBM}), and define its characteristic polynomial as
\begin{equation*}
p_n(t,\theta): = \text{det}\left(I_n-e^{-i\theta}U_n(t)\right) = \prod_{k=1}^n (1-e^{i(\theta_k(t)-\theta)}), \quad (\theta, t) \in [0,2\pi) \times [0,\infty),
\end{equation*}
where $0 \leq \theta_1(t) < ... < \theta_n(t) < 2\pi$ denote the eigenangles of unitary Brownian motion. We define its logarithm by
\begin{align*}
\begin{split}
\log p_n(t,\theta):=& \sum_{k = 1}^n \log (1-e^{i(\theta_k(t) -\theta)}),
\end{split}
\end{align*}
with the branches on the RHS being the principal branches, such that
\begin{equation*}
\Im \log (1-e^{i(\theta_k(t) - \theta)}) \in \left( - \frac{\pi}{2}, \frac{\pi}{2} \right],
\end{equation*}
with $\Im \log 0 := \pi/2$. \\

Hughes, Keating and O'Connell proved that for any fixed time $t \geq 0$, $\log p_n(t, \cdot)$ converges to a generalized Gaussian field. Their result, reformulated to our setting, is as follows:

\begin{theorem} [Hughes, Keating, O'Connell \cite{HughesKeatingOConnell}]
    For any $\epsilon > 0$ and any fixed $t \geq 0$, the sequence of random functions $\left( \log p_n(t, \cdot) \right)_{n \in \mathbb{N}}$ converges in distribution in $H_0^{-\epsilon}(S^1)$ to the generalized Gaussian field
    \begin{align*}
        X(\theta) =& \sum_{k = 1}^\infty \frac{A_k}{\sqrt{k}} e^{ik\theta},
    \end{align*}
    where $A_k$ is a complex Gaussian whose real and imaginary parts are independent centered Gaussians with variance $1/(2k)$.
\end{theorem}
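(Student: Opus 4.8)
The plan is to reduce the statement to a Fisher–Hartwig / CUE moment computation plus a soft tightness argument in the negative Sobolev space $H_0^{-\epsilon}(S^1)$. Write the Fourier coefficients of $\log p_n(t,\cdot)$: since $\log(1-e^{i(\theta_k-\theta)}) = -\sum_{j\ge 1}\frac1j e^{ij(\theta_k-\theta)}$, one gets, for $j\ge 1$,
\[
\widehat{\log p_n}(j) \;=\; \frac{1}{2\pi}\int_0^{2\pi} \log p_n(t,\theta)\, e^{-ij\theta}\,\d\theta \;=\; -\frac{1}{j}\,\Tr\big(U_n(t)^{j}\big),
\]
and the coefficient of $e^{-ij\theta}$ similarly involves $\Tr(U_n(t)^{-j})=\overline{\Tr(U_n(t)^j)}$, while the $j=0$ coefficient is killed by working in the mean-zero space $H_0$. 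Because $U_n(t)$ is Haar-distributed for every fixed $t$, the Diaconis–Shahshahani theorem tells us that $(\Tr U_n, \Tr U_n^2, \dots, \Tr U_n^m)$ converges in distribution, as $n\to\infty$, to $(\sqrt1\, Z_1,\sqrt2\, Z_2,\dots,\sqrt m\, Z_m)$ with the $Z_j$ i.i.d.\ standard complex Gaussians; moreover the moments match exactly once $n\ge \sum j$. Hence the finite-dimensional distributions of the Fourier coefficients of $\log p_n(t,\cdot)$ converge to those of $X$, namely $\widehat X(j)=A_j/\sqrt j$ with $A_j = -\overline{Z_j}$ (the precise conjugation/sign is bookkeeping), and one checks $X\in H_0^{-\epsilon}(S^1)$ a.s.\ since $\mathbb E\|X\|_{H^{-\epsilon}}^2 = \sum_{j\ge1} j^{-\epsilon}\,\mathbb E|A_j|^2/j \asymp \sum_j j^{-1-\epsilon}<\infty$.

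The second ingredient is tightness of $(\log p_n(t,\cdot))_n$ in $H_0^{-\epsilon}(S^1)$. The natural route is a uniform second-moment bound on the Sobolev norm: I would show
\[
\sup_n \; \mathbb E\,\big\|\log p_n(t,\cdot)\big\|_{H^{-\epsilon}(S^1)}^2 \;=\; \sup_n \sum_{j\ge 1} \frac{2}{j^{2}\,(1+j^2)^{\epsilon/2}\cdot(\text{norm const})}\;\mathbb E\big|\Tr U_n(t)^j\big|^2 \;<\;\infty,
\]
using the exact identity $\mathbb E|\Tr U_n^j|^2 = \min(j,n)\le j$ from Diaconis–Shahshahani (valid for all $n$, not just asymptotically). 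Since the embedding $H_0^{-\epsilon/2}(S^1)\hookrightarrow H_0^{-\epsilon}(S^1)$ is compact (Rellich), a uniform bound on the $H^{-\epsilon/2}$-moment yields tightness in $H^{-\epsilon}$; combined with convergence of finite-dimensional distributions of Fourier coefficients — which determine the law on the separable Hilbert space $H_0^{-\epsilon}(S^1)$ — this upgrades to convergence in distribution in $H_0^{-\epsilon}(S^1)$.

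The one genuinely delicate point is controlling the \emph{imaginary part near the eigenangles}, i.e.\ the effect of the principal-branch truncation $\Im\log 0:=\pi/2$ and the jumps of $\Im\log p_n$ at $\theta=\theta_k(t)$: the naive Fourier expansion $-\sum_j \frac1j e^{ij(\theta_k-\theta)}$ converges only conditionally and does not literally equal the truncated logarithm pointwise, so one must argue that the difference (a sum of sawtooth-type functions with jumps at the $n$ eigenangles) is negligible in $H^{-\epsilon}$. Quantitatively, a single jump of size $O(1)$ contributes $O(1)$ in $H^{-\epsilon}$ uniformly, but $n$ of them could a priori blow up; the resolution is that these corrections are exactly accounted for by the identity $\widehat{\log p_n}(j) = -\frac1j \Tr U_n^j$, which holds in the $L^2(S^1)$ (hence $H^{-\epsilon}$) sense because $\log p_n(t,\cdot)\in L^2(S^1)$ for a.e.\ realization, so no separate estimate is actually needed once one is careful to interpret all Fourier series as $L^2$-identities rather than pointwise ones. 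So I expect the main obstacle to be purely a matter of making this $L^2$-vs-pointwise distinction rigorous and checking integrability of $\log p_n(t,\cdot)$ near the (a.s.\ distinct) eigenangles; everything else is the Diaconis–Shahshahani moment input plus the standard Hilbert-space tightness criterion.
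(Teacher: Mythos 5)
Your proposal is correct and follows essentially the same scheme as the paper: the paper cites this result from Hughes--Keating--O'Connell rather than reproving it, but its proof of the dynamical Theorem \ref{thm:main} is exactly your architecture — convergence of finite-dimensional distributions of the Fourier coefficients $-\frac{1}{k}\Tr(U_n^k)$, a uniform second-moment bound $\mathbb{E}|\Tr(U_n^k)|^2 = k\wedge n$ giving boundedness in a slightly stronger Sobolev norm, and compactness of the embedding plus Prokhorov to upgrade to convergence in $H_0^{-\epsilon}(S^1)$. Your handling of the branch/pointwise-versus-$L^2$ issue (the Fourier identity holds in $L^2(S^1)$ since $\log p_n(t,\cdot)\in L^2$ a.s.) and the sign/conjugation bookkeeping in matching $e^{-ik\theta}$ modes to the rotation-invariant Gaussians $A_k$ are both fine.
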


It is natural thus to assume that in the dynamic case, i.e. when considering $\log p_n$ also as a function of $t$, that the limit (in an appropriate function space) would be given by
\begin{align} \label{eqn:X}
    X(t,\theta) = \sum_{k = 1}^\infty \frac{A_k(t)}{\sqrt{k}} e^{ik\theta},
\end{align}
where $A_k(\cdot)$, $k \in \mathbb{N}$, are independent complex Ornstein-Uhlenbeck processes started from their stationary distribution, i.e. (up to a linear time change) solutions to the SDEs
\begin{align}
    \label{eqn:SDEA}
    \text{d}A_k(t) = -k A_k(t) \text{d}t + \text{d}\left(W_k(t) + i\tilde{W}_k(t) \right),
\end{align}
with $A_k(0)$ being a complex Gaussian whose real and imaginary parts are independent Gaussians with variance $1/(2k)$, and $(W_k(t))_{t \geq 0}$, $(\tilde{W}_k(t))_{t \geq 0}$, $k \in \mathbb{N}$, denoting real standard Brownian motions. \\

Our main result proves precisely that (for a definition of the Sobolev spaces $H^{s}([0,T])$ and $H^{-\epsilon}_0(S^1)$ see Section \ref{section:Sobolev}):

\begin{theorem} [Main Result] \label{thm:main}
For any $s \in (0,\frac{1}{2})$, $\epsilon > s$ and $T>0$, the sequence of random fields $ \left( \log p_n (\cdot, \cdot) \right)_{n \in \mathbb{N}}$ converges in distribution in the tensor product of Hilbert spaces $H^{s}([0,T]) \otimes  H^{-\epsilon}_0(S^1)$ to the generalized Gaussian field $X$ in (\ref{eqn:X}).
\end{theorem}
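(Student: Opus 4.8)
The plan is to establish convergence in distribution via the standard route: tightness of the sequence $(\log p_n)_{n}$ in $H^{s}([0,T]) \otimes H^{-\epsilon}_0(S^1)$, together with identification of any subsequential limit as $X$. For the identification part, it suffices to show convergence of the finite-dimensional projections, i.e. of the Fourier-in-$\theta$ coefficients $c_k^{(n)}(t) := \frac{1}{2\pi}\int_0^{2\pi} \log p_n(t,\theta) e^{-ik\theta}\,\mathrm{d}\theta$ (suitably interpreted, since $\log p_n$ is only a distribution in $\theta$), jointly over finitely many $k$, as processes in $t$; here one expects $c_k^{(n)}(\cdot) \to A_k(\cdot)/\sqrt{k}$. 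This is exactly the content available from Spohn's result together with the Dyson dynamics of the eigenangles: each coefficient is, up to lower-order corrections, a linear statistic $-\frac{1}{k}\sum_{j} e^{ik\theta_j(t)}$ of the eigenangle process, and these are known to converge to independent complex Ornstein--Uhlenbeck processes. I would package this as the lemma referenced in the introduction (\ref{lemma:finite dimensionial}) and cite Spohn \cite{Spohn} and Bourgade--Falconet \cite{BourgadeFalconet} for the requisite joint convergence and for control of the error terms coming from the truncation of the logarithm near the eigenvalues.

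The main work is tightness, and the natural tool is a Kolmogorov-type criterion in the Hilbert space $H^{s}([0,T]) \otimes H^{-\epsilon}_0(S^1)$: it suffices to obtain a uniform-in-$n$ bound
\[
\mathbb{E}\,\big\| \log p_n \big\|_{H^{s'}([0,T]) \otimes H^{-\epsilon'}_0(S^1)}^2 \;\leq\; C(s',\epsilon',T) \;<\; \infty
\]
for some $s' \in (s,\tfrac12)$ and some $\epsilon' \in (s',\epsilon)$, since the embedding $H^{s'}\otimes H^{-\epsilon'} \hookrightarrow H^{s}\otimes H^{-\epsilon}$ is Hilbert--Schmidt (compact) when the exponents are ordered this way, and a ball in the finer space is relatively compact in the coarser one. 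Expanding in the Fourier basis, this bound becomes
\[
\sum_{k \geq 1} k^{-2\epsilon'} \, \mathbb{E}\,\big\| \widehat{\log p_n}(\cdot, k) \big\|_{H^{s'}([0,T])}^2 \;\leq\; C,
\]
so the crux is a two-sided estimate: first, that for each fixed $k$ the $t$-regularity of the $k$-th Fourier mode is genuinely $H^{s'}$ for $s'<\tfrac12$ with a constant that does not blow up in $n$ (reflecting that the limiting Ornstein--Uhlenbeck paths have exactly Brownian-type $\tfrac12^-$ Hölder/Sobolev regularity in time); and second, that this $H^{s'}$-norm-squared is summable against $k^{-2\epsilon'}$, i.e. grows no faster than $k^{2s' - 1 + o(1)}$ in $k$ — this is the log-correlated nature of the field in $\theta$, giving a $1/k$ variance for the $k$-th mode at fixed time, with the extra $k^{2s'}$ coming from differentiating in time. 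Both halves should follow from a moment computation: $\mathbb{E}|\widehat{\log p_n}(t,k) - \widehat{\log p_n}(u,k)|^2$ estimated uniformly in $n$, $k$, $t$, $u$. The cleanest way to get such moment bounds uniformly over all $n$ (not just asymptotically) is to combine the Dyson SDE with Itô calculus for the trace statistics $p_n(t,\theta)$ and its reciprocal, or to invoke the Wick-type identity advertised at the end of the excerpt to reduce second moments of products of traces under unitary Brownian motion to Haar-moment computations that are then polynomial in $n$ by Diaconis--Shahshahani \cite{DiaconisSha}.

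The hard part, and where I expect the real obstacle to lie, is twofold. Near the circle $\theta = \theta_j(t)$ the function $\log p_n$ has genuine singularities, so controlling its Fourier coefficients $\widehat{\log p_n}(t,k)$ uniformly in $k$ requires a careful decomposition into the smooth "macroscopic" part and the singular local contributions — precisely the Fisher--Hartwig type analysis that Bourgade--Falconet developed dynamically; one must check their estimates are uniform enough in both $k$ and $t$ and survive integration against $k^{-2\epsilon'}$. The second difficulty is controlling the \emph{joint} space-time modulus: the time increment of a high Fourier mode mixes the $1/k$ spatial decay with the Ornstein--Uhlenbeck relaxation rate $k$ (from $\mathrm{d}A_k = -kA_k\,\mathrm{d}t + \dots$), and one must verify that the resulting balance still yields a finite double sum — this is exactly why the statement trades regularity in $t$ against regularity in $\theta$ ($\epsilon > s$), and getting the exponents to close requires keeping track of these competing powers of $k$ carefully rather than through soft arguments. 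Once the uniform second-moment bound is in place, tightness follows from the Hilbert--Schmidt embedding, the subsequential limits are Gaussian (being limits of linear statistics whose cumulants of order $\geq 3$ vanish in the limit, again from \cite{Spohn, BourgadeFalconet}), and matching the covariance with that of $X$ in (\ref{eqn:X}) completes the identification and hence the proof.
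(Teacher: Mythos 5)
Your overall architecture is the same as the paper's: convergence of the Fourier-mode processes via Spohn's theorem (the paper's Lemma \ref{lemma:finite dimensionial}), plus tightness obtained from a uniform-in-$n$ second-moment bound in a slightly finer space $H^{s'}([0,T])\otimes H_0^{-\epsilon'}(S^1)$ with $s<s'<\epsilon'<\epsilon$, combined with a compact embedding (the paper's Lemmas \ref{lemma:boundedness} and \ref{lemma:compact inclusion}), and you correctly identify both the relevant quantity $\mathbb{E}\,|\widehat{\log p_n}(t,k)-\widehat{\log p_n}(u,k)|^2$ and the exponent balance (increment of order $k^{2}|t-u|$ for small time lags, stationary size $k\wedge n$ otherwise, summable against $k^{-2\epsilon'-2}$ precisely when $s<1/2$ and $\epsilon>s$). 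The differences are in implementation. First, your anticipated ``hard part'' --- a dynamical Fisher--Hartwig decomposition to control the Fourier coefficients near the eigenvalue singularities --- is not needed: the $k$-th Fourier coefficient of $\log p_n(t,\cdot)$ is \emph{exactly} $-\Tr(U_n^k(t))/k$, with no truncation error, so everything reduces to second moments of traces. Second, for those second moments the paper does not use It\^o calculus or the Wick-type identity of Section \ref{section:Wick} (which is presented only as a by-product and plays no role in the proof); instead it quotes the explicit two-time covariance of Bourgade--Falconet (Corollary \ref{thm:multi time}), which gives $\mathbb{E}\big[\Tr(U_n^k(t))\overline{\Tr(U_n^k(0))}\big]=e^{-k(k\vee n)t/n}\sinh\!\big(\tfrac{k(k\wedge n)t}{n}\big)/\sinh\!\big(\tfrac{kt}{n}\big)$, and elementary $\sinh$ inequalities then yield $\mathbb{E}\,|\Tr(U_n^k(t))-\Tr(U_n^k(0))|^2\le 4k^2 t$ for $t\le k^{-1}$; your proposed routes could plausibly produce the same bound but are left unexecuted, and they are heavier than necessary. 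Two minor corrections: the embedding $H^{s'}\otimes H_0^{-\epsilon'}\hookrightarrow H^{s}\otimes H_0^{-\epsilon}$ is compact but not in general Hilbert--Schmidt when $s'-s$ and $\epsilon-\epsilon'$ are small (compactness, via Kondrachov and the fact that a tensor product of compact operators is compact, is all that is used); and no separate argument for Gaussianity of subsequential limits via cumulants is needed, since Spohn's theorem already identifies the finite-dimensional limits as the Ornstein--Uhlenbeck processes $A_k$ of (\ref{eqn:SDEA}).
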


A calculation shows that the covariance functions of $\Re X$ and $\Im X$ are given by
\begin{align*}
    \mathbb{E}(\Re X(t,\theta), \Re X (t',\theta')) &= \mathbb{E}(\Im X(t,\theta), \Im X (t',\theta'))  = \frac{1}{2} \log \frac{\max (e^{-t}, e^{-t'})}{|e^{-t}e^{i\theta} - e^{-t'}e^{i\theta'}|}.
\end{align*}
The centered Gaussian fields $\Re X$ and $\Im X$ with such a covariance function have been identified as Gaussian free fields on the infinite cylinder $\mathbb{R} \times \mathbb{R} / 2\pi \mathbb{Z}$ in \cite[Section 2.2]{BourgadeFalconet}.




\begin{remark}
    Theorem \ref{thm:main} implies that there is a trade-off between regularity in $\theta$ and regularity in $t$. We believe that the regularity we obtain is optimal, in the sense that for $s = 1/2$ or $\epsilon = s$, $X$ is almost surely not an element of the tensor product of $H^{s}([0,T])\otimes  H^{-\epsilon}_0(S^1)$ anymore.

    While the limiting field is rotationally invariant from an infinitesimal point of view, this is not the case for $\log p_n$ with finite $n$. In particular, one can exchange the regularity in the variable $t$ with the regularity in the variable $\theta$ for the limiting field, but for our proof of convergence to work, the Sobolev regularity $-\epsilon$ in the variable $\theta$ needs to be negative which is not the case for the Sobolev regularity $s$ in the variable $t$.
\end{remark}

Just like in the stationary case, the Gaussian field $X$ can't be defined pointwise as its variance at each point is infinite, but it can still be "exponentiated" to build a Gaussian multiplicative chaos (GMC) measure. When we let $h(t,\theta)$ denote the real part of $X(t,\theta)$, and denote by $h_\delta(t,\theta)$ a mollification of $h$, then for $\gamma \in (0,2\sqrt{2})$ the random measures
\begin{align*}
    e^{\gamma h(t,\theta)} \text{d}\theta \text{d}t := \lim_{\delta \rightarrow 0} e^{\gamma h_\delta (t,\theta) - \frac{\gamma^2}{2}\mathbb{E}(h_\delta(t,\theta))} \text{d}\theta \text{d}t
\end{align*}
exist and are non-trivial, where the limit is in probability w.r.t. the topology of weak convergence of measures on $\mathbb{R} \times \mathbb{R} / 2\pi \mathbb{Z}$, see \cite{Berestycki} for a self-contained proof of this fact. Bourgade and Falconet proved that exponentiating $\log |p_n(t,\theta)|$ in this way, and then taking the large $n$ limit, gives the same limiting measure as when first taking the large $n$ limit to obtain the Gaussian free field $h$, and then exponentiating it. Their result is the dynamical analogue to Webb's result for fixed $t$ and the measures being on the unit circle \cite{Webb}, and its precise statement is as follows:

\begin{theorem}[Bourgade, Falconet \cite{BourgadeFalconet}]
\label{thm:bourgadeFalconet}
    For every $\gamma \in (0,2\sqrt{2})$ it holds that
    \begin{align*}
        \lim_{n \rightarrow \infty} \frac{|p_n(t,\theta)|^\gamma}{\mathbb{E}\left( |p_n(t,\theta)|^\gamma \right)} \text{d}\theta \text{d}t = e^{\gamma h(t,\theta)} \text{d}\theta \text{d}t,
    \end{align*}
    where the convergence is in distribution in the space of Radon measures on the infinite cylinder $\mathbb{R} \times \mathbb{R} / 2\pi \mathbb{Z}$, equipped with the topology of weak convergence.
\end{theorem}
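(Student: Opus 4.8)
The plan is to follow the standard two-step route for proving convergence of an approximately log-correlated multiplicative chaos to its Gaussian limit: first establish convergence in the second-moment (``$L^2$'') phase by a direct moment computation, then bootstrap to the full subcritical range by Gaussian comparison. Write points of the cylinder as $x=(t,\theta)$, let $K(x,y)$ denote the covariance of $h=\Re X$ displayed above, and set $\mu_n(dx)=|p_n(x)|^\gamma/\mathbb{E}(|p_n(x)|^\gamma)\,dx$ and $\mu(dx)=e^{\gamma h(x)}\,dx$. Convergence in distribution of Radon measures in the weak topology reduces to convergence in distribution of the scalar integrals $\mu_n(f)=\int f\,d\mu_n$ against a determining countable family of nonnegative compactly supported continuous $f$, together with tightness. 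Here tightness is immediate: by the very choice of normalization, $\mathbb{E}(\mu_n(f))=\int f(x)\,dx$ for every $n$, so the expected mass on any compact set is bounded uniformly in $n$. Thus everything reduces to proving $\mu_n(f)\to\mu(f)$ in distribution for each fixed $f$, and the content lies entirely in the fluctuations.

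First I would treat the $L^2$ phase $\gamma\in(0,2)$. The crucial input is the dynamical Fisher--Hartwig asymptotics of Bourgade--Falconet, giving the large-$n$ behaviour of $\mathbb{E}(|p_n(x)|^\gamma)$ and of the two-point function $\mathbb{E}(|p_n(x)|^\gamma|p_n(y)|^\gamma)$, including the regime where $x$ and $y$ collide. Substituting these into the variance yields
\begin{align*}
\mathbb{E}(\mu_n(f)^2) &=\iint f(x)f(y)\,\frac{\mathbb{E}(|p_n(x)|^\gamma|p_n(y)|^\gamma)}{\mathbb{E}(|p_n(x)|^\gamma)\,\mathbb{E}(|p_n(y)|^\gamma)}\,dx\,dy \\
&\longrightarrow\ \iint f(x)f(y)\,e^{\gamma^2 K(x,y)}\,dx\,dy=\mathbb{E}(\mu(f)^2),
\end{align*}
where the ratio under the integral converges to $e^{\gamma^2K(x,y)}$ pointwise off the diagonal. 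Since $K(x,y)\sim-\tfrac12\log|x-y|$ near the diagonal, the limiting double integral over the two-dimensional cylinder is finite exactly when $\gamma^2/2<2$, i.e. $\gamma<2$, which is the $L^2$ threshold, and a uniform integrable majorant near the diagonal justifies the passage to the limit by dominated convergence. To upgrade this convergence of moments to genuine convergence of $\mu_n(f)$, I would compare $\mu_n$ with the multiplicative chaos of an explicit centred Gaussian field $G_n$ sharing the leading covariance of $\log|p_n|$: the low-frequency Fourier modes of $\log p_n$, built from the traces $\Tr(U_n(\cdot)^k)$, converge jointly to the complex Gaussian modes $A_k(\cdot)$ of $X$ (the finite-dimensional content of Theorem \ref{thm:main}), while the Fisher--Hartwig asymptotics certify that the high-frequency remainder contributes negligibly to the exponential moments; in the range $\gamma<2$ this comparison can be carried out in $L^2$, and the limit is identified as $\mu(f)$.

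Second, to reach the full range $\gamma\in[2,2\sqrt2)$, where the second moment is infinite and the above computation breaks down, I would argue by Gaussian comparison in the spirit of Kahane's convexity inequalities combined with a truncation of the chaos. One compares the measure built from $\log|p_n|$ with that built from a genuinely Gaussian field of matching asymptotic covariance, using Kahane's inequality to bound expectations of convex functionals of one chaos by those of the other, and one works with a regularized chaos truncated at a parameter $\gamma'<2$ on a ``good'' event on which $\log|p_n|$ is comparable to its Gaussian surrogate. This transfers both the convergence and the uniform integrability of $\mu_n(f)$ from the $L^2$ phase up to any $\gamma<2\sqrt2$; once $L^1$ convergence together with uniform integrability is in hand, the identification of the limit as $\mu(f)=\int f\,e^{\gamma h}$ proceeds as before.

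The main obstacle is exactly the non-Gaussianity of $\log|p_n|$ at the interface of the two steps: neither Kahane's inequality nor the Girsanov/size-biasing identities of Gaussian chaos are available off the shelf, so every Gaussian manipulation must be routed through, or replaced by, the Fisher--Hartwig asymptotics for the multi-point moments $\mathbb{E}(\prod_j|p_n(x_j)|^{\gamma_j})$. Controlling these moments uniformly as the points $x_j$ merge---which governs the behaviour of the chaos near the thick points carrying its mass in the regime $\gamma\ge2$---is the technically demanding heart of the argument, and it is precisely there that the dynamical Fisher--Hartwig formula does the essential work.
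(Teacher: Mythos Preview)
This theorem is not proved in the paper: it is quoted as a result of Bourgade and Falconet \cite{BourgadeFalconet} and serves purely as context for the main result. There is therefore no proof in the paper to compare your proposal against.

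That said, your sketch is a faithful outline of the strategy actually used in \cite{BourgadeFalconet}: establish the $L^2$ phase via the dynamical Fisher--Hartwig asymptotics for two-point correlations, then extend to the full subcritical window by comparison with a Gaussian surrogate. Your identification of the main obstacle---that Kahane's inequality and Girsanov-type identities are unavailable for the non-Gaussian field $\log|p_n|$, so every step must be routed through uniform multi-point Fisher--Hartwig estimates as points merge---is exactly right, and is indeed the technical core of that paper. As a high-level proof plan there is no gap; but be aware that your proposal is a summary of a substantial paper rather than a self-contained argument, and none of the hard analysis (the merging-singularity estimates, the precise Gaussian coupling, the uniform integrability in the $\gamma\geq 2$ regime) is actually carried out here.
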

Our main result complements their asymptotics in that it shows in which Sobolev spaces the convergence of the underlying fields $\log |p_n|$ and $\Im \log p_n$ to the Gaussian free field $h$ holds.\\

Further, Theorem \ref{thm:main} is related to the below result by Spohn, which we also use in our proof. 
For real-valued functions $f \in H_0^{3/2 + \epsilon}(S^1, \mathbb{R})$, $\epsilon > 0$, Spohn considered linear statistics of the eigenvalues $e^{i\theta_1(t)},...,e^{i\theta_n(t)}$ of unitary Brownian motion (in fact he more generally considered interacting particles on the unit circle with different repulsion strengths):
\[
    \xi_n(t,f): =\sum_{j=1}^n f(e^{i\theta_j(t)}), \qquad (t,f) \in [0,\infty) \times H^{3/2+\epsilon}_0(S^1, \mathbb{R}).
\]
Since $H_0^{-3/2 - \epsilon}(S^1, \mathbb{R})$ is the dual space of $H_0^{3/2 + \epsilon}(S^1, \mathbb{R})$, one can consider $\xi_n$ as a random continuous map $t \mapsto \xi_n(t, \cdot) \in H_0^{-3/2-\epsilon}(S^1, \mathbb{R})$.
\begin{theorem}[Spohn \cite{Spohn}] \label{thm:Spohn}
    For any $\epsilon>0$, as $n \rightarrow \infty$, $\xi_n(t,f)$ converges to a stationary solution of the SDE
    \[
    \d \xi(t,f) =\xi(t, -\sqrt{-\partial_\theta^2} f) \d t+\d \mathcal{W}(t,f'),
    \]
    where $\d \mathcal{W}$ is a white noise given by
    \[\mathbb{E}[ \d \mathcal{W}(t,f) \d \mathcal{W}(s,g)]=2 \delta(t-s) \d s\d t \frac{1}{2\pi}\int_0^{2\pi} f(e^{i\theta})g(e^{i\theta})\d \theta, \]
    and where the convergence is in distribution in $\mathcal{C}(\mathbb{R}, H^{-3/2-\epsilon}(S^1, \mathbb{R} ))$, endowed with the topology of locally uniform convergence. The stationary distribution is given by a Gaussian with covariance
    \begin{align*}
        \mathbb{E} \left( \xi(t,f) \xi(t,g) \right) = \sum_{k \neq 0} |k| f_k g_k.
    \end{align*}
\end{theorem}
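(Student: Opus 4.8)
The plan is to prove the theorem by reducing, via the linearity of $f\mapsto\xi_n(t,f)$ (extended complex-linearly to complex test functions), to the elementary linear statistics $p_k^{(n)}(t):=\xi_n(t,e^{ik\,\cdot})=\Tr\big(U_n(t)^k\big)$, $k\in\mathbb{Z}\setminus\{0\}$, and to analyse finitely many of these jointly as continuous semimartingales. Applying It\^o's formula to $\Tr(U_n(t)^k)$ — directly to the matrix stochastic differential equation defining $U_n$, or via Dyson's equation \cite{Dyson} for the eigenangles — yields an exact finite-$n$ decomposition
\[
\d p_k^{(n)}(t)=-|k|\,p_k^{(n)}(t)\,\d t+\tfrac1n\,R_k^{(n)}(t)\,\d t+\d M_k^{(n)}(t),
\]
after the linear time change normalizing the generator as in Spohn's statement, where $R_k^{(n)}$ is a fixed polynomial in the $p_j^{(n)}$ with $|j|<|k|$ and $M_k^{(n)}$ is a continuous martingale. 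Here $-|k|\,p_k^{(n)}(t)=\xi_n\big(t,-\sqrt{-\partial_\theta^2}\,e^{ik\,\cdot}\big)$, and, since the $p_j^{(n)}$ are bounded in $L^2$ uniformly in $t$ (again by \cite{DiaconisSha}), the correction $\tfrac1n R_k^{(n)}$ is negligible; by linearity this reconstructs the drift $\xi(t,-\sqrt{-\partial_\theta^2}f)$ first for trigonometric polynomials $f$ and then, by density, for all $f\in H_0^{3/2+\epsilon}(S^1,\mathbb{R})$.

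Next I would treat the martingale part by a functional martingale central limit theorem. By It\^o's isometry, $\langle M_j^{(n)},M_k^{(n)}\rangle_t$ is the time integral of another explicit polynomial in the $p_\ell^{(n)}$, and the crucial point is that, as $n\to\infty$, this polynomial self-averages to the deterministic constant prescribed by the white noise covariance in the statement — for $f,g$ trigonometric monomials the constant is read off from $2\cdot\tfrac1{2\pi}\int_0^{2\pi}f'g'\,\d\theta$, and it is precisely the one making the limiting mode-wise complex Ornstein-Uhlenbeck processes have the claimed stationary covariance. Because the one-time marginal of $U_n(t)$ is Haar for every $t\ge0$, the first- and second-moment asymptotics for products of traces that are needed here are exactly the Diaconis-Shahshahani estimates \cite{DiaconisSha}: these both identify $\lim_n\mathbb{E}\,\langle M^{(n)}\rangle_t$ and bound its variance by $O(1/n)$, uniformly on compact time intervals by stationarity of the increments of $U_n$, so the bracket converges in probability, locally uniformly in $t$. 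Rebolledo's criterion — with vacuous jump part, all martingales being continuous — then gives convergence in distribution of $\big(M_k^{(n)}\big)_{|k|\le K}$ to the Gaussian white-noise martingale with the stated covariance. Feeding this, together with the weak convergence of the initial data $p_k^{(n)}(0)$ to their Gaussian (Haar) law, through the solution map of the above linear stochastic differential equation — continuous by a Gronwall estimate — gives joint convergence in distribution of $\big(p_k^{(n)}\big)_{|k|\le K}$, for every $K$, to the family of stationary complex Ornstein-Uhlenbeck processes.

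To upgrade finite-dimensional convergence to convergence in $\mathcal{C}(\mathbb{R},H_0^{-3/2-\epsilon}(S^1,\mathbb{R}))$, I would prove tightness there. The $H^{-3/2-\epsilon}$-norm of $\xi_n(t,\cdot)$ is controlled by $\sum_{k\ne0}|k|^{-3-2\epsilon}|p_k^{(n)}(t)|^2$, whose expectation is $\le\sum_{k\ne0}|k|^{-3-2\epsilon}\min(|k|,n)\le\sum_{k\ne0}|k|^{-2-2\epsilon}<\infty$; interpolating this uniform $L^2$ bound with the crude increment estimate from the decomposition above gives $\mathbb{E}\,|p_k^{(n)}(t)-p_k^{(n)}(s)|^2\le C\,|k|^{1+2\alpha}|t-s|^{\alpha}$ for any $\alpha\in(0,1)$, and for $\alpha$ small $\sum_{k\ne0}|k|^{-3-2\epsilon}|k|^{1+2\alpha}<\infty$, so Kolmogorov's continuity criterion applied in the Sobolev space together with a uniform Fourier-tail bound yields tightness of $(\xi_n)_n$. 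Any subsequential limit then has continuous paths, has the finite-dimensional distributions found above — those of a solution of the limiting linear equation driven by the Gaussian white noise — and is stationary; as that equation decouples into stationary complex Ornstein-Uhlenbeck modes it has a unique stationary solution, which identifies the limit and completes the proof. The main obstacle is the self-averaging of the martingale bracket uniformly on compact time intervals, i.e. controlling the variance of a fluctuating quadratic-variation process built from traces of powers of $U_n$ — this is exactly where the combinatorial moment input is indispensable; a secondary difficulty is running the tightness argument genuinely in the infinite-dimensional Sobolev space rather than only for finitely many Fourier modes, while keeping the $O(1/n)$ drift corrections controlled uniformly in $t$.
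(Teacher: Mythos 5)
The paper does not prove Theorem \ref{thm:Spohn}: it is imported from Spohn \cite{Spohn} (with \cite{BourgadeFalconet} cited for a strengthening) and is used only through Lemma \ref{lemma:finite dimensionial}, so your proposal has to be judged on its own merits as a reconstruction. The route you choose is the standard one and its skeleton is sound: with the paper's normalisation, It\^o's formula gives
$\d \Tr(U_n^k(t)) = -k\,\Tr(U_n^k(t))\,\d t - \tfrac{k}{n}\sum_{j=1}^{k-1}\Tr(U_n^j(t))\Tr(U_n^{k-j}(t))\,\d t + \d M_k^{(n)}(t)$,
which matches your claimed decomposition, and the $\tfrac1n$-drift is indeed negligible by the stationary (Haar) moment bounds of \cite{DiaconisSha}. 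One point where you overestimate the difficulty: what you call the ``main obstacle'', the self-averaging of the bracket, is essentially trivial here, because with the inner product $\langle A,B\rangle = n\Tr(AB^*)$ the diagonal brackets are exactly deterministic, $\d\langle M_k^{(n)},\overline{M_k^{(n)}}\rangle_t = 2k^2\,\d t$, while the off-diagonal ones are of the form $\pm\tfrac{2kl}{n}\Tr(U_n^{k\pm l}(t))\,\d t$ and vanish in $L^2$ by $\mathbb{E}|\Tr U^m|^2 = m\wedge n$; so the martingale CLT input reduces to a one-line computation plus the Diaconis--Shahshahani second moment. (You should also say a word on why the limiting driving noises are jointly Gaussian with, and independent of, the limiting initial data; this follows since the $M_k^{(n)}$ are martingales for filtrations containing $\sigma(U_n(0))$, a property that survives the limit, but it is a step, not a given.)

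The step that fails as written is the tightness upgrade to $\mathcal{C}(\mathbb{R}, H^{-3/2-\epsilon}(S^1,\mathbb{R}))$. You propose to feed the bound $\mathbb{E}\|\xi_n(t,\cdot)-\xi_n(s,\cdot)\|_{-3/2-\epsilon}^2 \le C|t-s|^{\alpha}$, obtained by interpolating $\mathbb{E}|\Tr(U_n^k(t))-\Tr(U_n^k(s))|^2 \le C\min(k^2|t-s|,k)$, into Kolmogorov's continuity/tightness criterion. But that criterion needs a moment estimate $\mathbb{E}\|\xi_n(t,\cdot)-\xi_n(s,\cdot)\|^{p} \le C|t-s|^{1+\beta}$ with $\beta>0$; a second moment with exponent $\alpha\le 1$ gives neither path continuity nor tightness in the uniform topology, so as stated the argument does not close. (Note that the paper's Lemma \ref{lemma:boundedness} uses the very same $\min(k^2 t,k)$ bound, but only to control a Slobodeckij-type norm in expectation, where second moments suffice; extracting tightness in $\mathcal{C}$ is a genuinely stronger demand.) The gap is fixable inside your own framework: since $\langle M_k^{(n)}\rangle$ is deterministic, Burkholder--Davis--Gundy gives $\mathbb{E}|M_k^{(n)}(t)-M_k^{(n)}(s)|^{2p} \le C_p (k^2|t-s|)^{p}$, the drift increments are handled by the uniform-in-$t$ trace moments, and summing over $k$ against the weight $|k|^{-3-2\epsilon}$ then yields $2p$-th moment increment bounds with time exponent larger than one (this is exactly where the strongly negative index $-3/2-\epsilon$ is needed); alternatively, combine mode-wise tightness in $\mathcal{C}([0,T],\mathbb{C})$, already delivered by the martingale FCLT, with a uniform-in-$t$ control of the Fourier tail via Doob's maximal inequality, in the spirit of Mitoma's criterion. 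With that repair, your identification of the limit as the stationary infinite-dimensional Ornstein--Uhlenbeck process, and hence of the stationary covariance $\sum_{k\neq 0}|k| f_k g_k$, goes through.
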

Here, $\sqrt{-\partial_\theta^2} f$ is simply the function whose $j^{\text{th}}$ Fourier coefficient is $|j|$ times the $j^{\text{th}}$ Fourier coefficient of $f$.
This result shows in particular that the $k^{\text{th}}$ Fourier coefficient of $\ln p_n$ converges toward $\frac{A_k}{\sqrt{k}}$ (see Lemma \ref{lemma:finite dimensionial} below).    
Further, during the proof of Theorem \ref{thm:main}, we will need the following result from Bourgade and Falconet \cite[Corollary 3.2]{BourgadeFalconet}:

\begin{corollary}[Bourgade, Falconet] \label{thm:multi time}
    Let $(z_1(t),...,z_n(t))_{t \geq 0}$ denote the eigenvalue process of unitary Brownian motion, started at Haar measure, and denote $\sgn(x) = 1_{x > 0} - 1_{x < 0}$. For $f,g \in H_0^{1/2}(S^1, \mathbb{R})$, we have for every $n \in \mathbb{N}$ and $t \geq 0$,
    \begin{align*}
        &\mathbb{E} \Big[ \Big( \sum_{j = 1}^n f(z_j(0)) \Big) \Big( \sum_{j = 1}^n g(z_j(t)) \Big) \Big]
        = \hspace{-0.2cm}\sum_{|k| \leq n - 1} \hspace{-0.2cm} f_k g_{-k} \sgn (k) e^{-|k|t} \frac{\sinh (\frac{k^2t}{n})}{\sinh (\frac{kt}{n})} + \sum_{|k| \geq n} f_k g_{-k} e^{-\frac{k^2t}{n}} \frac{\sinh (kt)}{\sinh (\frac{kt}{n})}.
    \end{align*}
\end{corollary}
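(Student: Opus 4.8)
The plan is to reduce the identity to a single-mode two-time covariance and to evaluate that by the representation theory of $U(n)$. Using the Fourier expansions $f(e^{i\theta})=\sum_{k\neq 0}f_k e^{ik\theta}$, $g(e^{i\theta})=\sum_{k\neq 0}g_k e^{ik\theta}$ and $\sum_{j=1}^n z_j(s)^k=\Tr(U_n(s)^k)$, bilinearity reduces the claim to computing $\mathbb{E}\big[\Tr(U_n(0)^k)\Tr(U_n(t)^\ell)\big]$ for all $k,\ell\neq 0$ (the exchange of the double sum with the expectation being justified by the crude bound $\mathbb{E}\,\big|\sum_j f(z_j(s))\big|^2\le n^2\|f\|_{L^2}^2$, valid because each $z_j(s)$ has uniform marginal). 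First I would invoke rotation invariance: the law of $(U_n(s))_{s\ge 0}$ started from Haar measure is invariant under $U_n(\cdot)\mapsto e^{i\varphi}U_n(\cdot)$, so this expectation picks up a factor $e^{i\varphi(k+\ell)}$ for every $\varphi$ and hence vanishes unless $\ell=-k$. As $\Tr(U^{-k})=\overline{\Tr(U^k)}$ for unitary $U$, it remains to compute, for $k\ge 1$,
\[
c_n(k,t):=\mathbb{E}\big[\Tr(U_n(0)^k)\,\overline{\Tr(U_n(t)^k)}\big],
\]
which is real; the case $k<0$ then follows by conjugation, and the factor $\sgn(k)$ in the statement is just the bookkeeping of the two signs.

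Next I would expand the power sum into characters. Restricted to $n$ variables, $p_k=\sum_{i=0}^{m-1}(-1)^i s_{(k-i,1^i)}$ with $m:=\min(k,n)$: this is the expansion of $p_k$ over hook Schur functions coming from the Murnaghan--Nakayama rule, truncated because $s_\lambda$ vanishes identically in $n$ variables once $\lambda$ has more than $n$ rows. Writing $\mu_i:=(k-i,1^i)$, each $s_{\mu_i}(U)$ equals the character $\chi_{\mu_i}(U)$ of the irreducible representation of $U(n)$ with highest weight $\mu_i$, and the $\mu_i$, $0\le i\le m-1$, are pairwise distinct partitions with at most $n$ rows, so $\Tr(U^k)=\sum_{i=0}^{m-1}(-1)^i\chi_{\mu_i}(U)$ with $\chi_{\mu_0},\dots,\chi_{\mu_{m-1}}$ an orthonormal system under Haar measure.

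Then I would condition on $U_n(0)$. Writing $U_n(t)=U_n(0)\,g(t)$ with $g$ a Brownian motion on $U(n)$ from the identity, independent of $U_n(0)$, and using that the law of $g(t)$ is conjugation-invariant --- so that, by Schur's lemma, $\mathbb{E}\big[\chi_\lambda(V g(t))\big]=e^{-\gamma_\lambda t}\chi_\lambda(V)$ for every $V\in U(n)$, where $\gamma_\lambda>0$ is the eigenvalue of minus the Laplacian of $U(n)$ on the $\lambda$-isotypic component in the normalization of Section~\ref{section:UBM} --- one gets $\mathbb{E}[\chi_\lambda(U_n(t))\mid U_n(0)]=e^{-\gamma_\lambda t}\chi_\lambda(U_n(0))$; since $\gamma_\lambda$ is real the same holds with $\chi_\lambda$ replaced by $\overline{\chi_\lambda}$. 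Together with the orthonormality just noted,
\[
c_n(k,t)=\sum_{i,j=0}^{m-1}(-1)^{i+j}e^{-\gamma_{\mu_j}t}\,\mathbb{E}\big[\chi_{\mu_i}(U_n(0))\,\overline{\chi_{\mu_j}(U_n(0))}\big]=\sum_{i=0}^{m-1}e^{-\gamma_{\mu_i}t}.
\]
Finally, the Casimir formula for $\mathfrak u(n)$ gives, for the hook $\mu_i=(k-i,1^i)$, that $\gamma_{\mu_i}=\tfrac{k}{n}\,(n+k-1-2i)$ (a short explicit computation; the overall normalization is pinned down by the mode $k=1$, where the stated identity forces $\gamma_{(1)}=1$). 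Substituting and summing the geometric series $\sum_{i=0}^{m-1}e^{2kit/n}$ yields
\[
c_n(k,t)=\frac{e^{k(2m-n-k)t/n}-e^{-k(n+k)t/n}}{2\sinh(kt/n)},
\]
and factoring out $e^{-kt}$ when $m=k\le n$, respectively $e^{-k^2t/n}$ when $m=n<k$, turns this into the two $\sinh$-ratios $e^{-kt}\sinh(k^2t/n)/\sinh(kt/n)$ and $e^{-k^2t/n}\sinh(kt)/\sinh(kt/n)$ (which agree at $k=n$). Reassembling over $k\neq 0$ and restoring negative $k$ via rotation invariance reproduces the two sums in the statement.

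The main obstacle is bookkeeping rather than anything conceptual: one must fix the normalization of the Laplacian eigenvalue $\gamma_\lambda$ so that it matches the unitary Brownian motion of Section~\ref{section:UBM} (equivalently, calibrate the constant on a single mode), and then carefully track the dichotomy $k\le n$ versus $k>n$ in the geometric summation --- which is exactly why the final formula breaks into two sums. The remaining steps --- vanishing of the cross terms, the hook expansion of $p_k$, and orthonormality of $U(n)$-characters --- are classical.
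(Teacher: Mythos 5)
The paper itself does not prove Corollary \ref{thm:multi time}; it is imported verbatim from Bourgade--Falconet, so there is no internal proof to compare against. Your blind derivation is correct and is essentially the standard route (and the one behind the cited result): reduce by bilinearity and rotation invariance of the Haar-stationary process to $c_n(k,t)=\mathbb{E}[\Tr(U_n(0)^k)\,\overline{\Tr(U_n(t)^k)}]$, expand the power sum over hook characters $p_k=\sum_{i=0}^{m-1}(-1)^i s_{(k-i,1^i)}$ with $m=\min(k,n)$, use independence of the multiplicative increment and centrality of its law to get $\mathbb{E}[\chi_\lambda(Vg(t))]=e^{-\gamma_\lambda t}\chi_\lambda(V)$, then orthonormality of characters and the geometric sum. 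I checked the hook Casimir value $\gamma_{(k-i,1^i)}=\tfrac{k}{n}(n+k-1-2i)$ in the normalization of Section \ref{section:UBM} and the final algebra: your expression collapses to $e^{-k(k\vee n)t/n}\sinh\!\big(\tfrac{k(k\wedge n)t}{n}\big)/\sinh\!\big(\tfrac{kt}{n}\big)$, which is exactly what is used later in the proof of Lemma \ref{lemma:boundedness}, and the $\sgn(k)$ bookkeeping for negative modes is right. One point to tighten: you cannot ``pin down the normalization'' of $\gamma_\lambda$ by requiring $\gamma_{(1)}=1$ from the identity being proven --- that is circular. Fix it from the model instead: with the basis orthonormal for $\langle A,B\rangle=n\Tr(AB^*)$ and $\mathrm{d}U_n=\sqrt{2}\,U_n\,\mathrm{d}B_n-U_n\,\mathrm{d}t$, the generator is the full Laplacian $\sum_k\partial_{X_k}^2$, whence $\gamma_\lambda=\tfrac1n\sum_i\lambda_i(\lambda_i+n+1-2i)$; equivalently, taking expectations in the It\^o equation gives $\partial_t\,\mathbb{E}[\Tr U_n(t)\mid U_n(0)]=-\mathbb{E}[\Tr U_n(t)\mid U_n(0)]$, so $\gamma_{(1)}=1$ directly from the SDE. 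With that one-line calibration, and reading $\sum_j f(z_j)$ for $f\in H_0^{1/2}$ as the $L^2$ limit of its trigonometric truncations (your bound $\mathbb{E}\,|\sum_j h(z_j(s))|^2\le n^2\|h\|_{L^2}^2$ applied to the tails justifies the term-by-term expansion at fixed $n$), the argument is complete.
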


\section{Mathematical Preliminaries}
\subsection{Unitary Brownian motion} \label{section:UBM}
Brownian motion $(U_n(t))_{t \geq 0}$ on the unitary group $U(n)$ is the diffusion governed by the stochastic differential equation
\begin{align*}
    \text{d}U_n(t) = \sqrt{2} U_n(t) \text{d}B_n(t) - U_n(t) \text{d}t,
\end{align*}
with $(B_n(t))_{t \geq 0}$ denoting a Brownian motion on the space of skew-Hermitian matrices. That is
\begin{align*}
    B_n(t) = \sum_{k = 1}^{n^2} X_k \tilde{B}^{(k)}(t),
\end{align*}
where $\tilde{B}^{(k)}$, $k = 1,...,n^2$, are independent one-dimensional standard Brownian motions, and where the matrices $X_k$, $k = 1,...,n^2$, are an orthonormal basis of the real vector space of skew-Hermitian matrices w.r.t. the scalar product $\langle A, B \rangle := n \Tr (AB^*)$. One such basis is given by the matrices $\frac{1}{\sqrt{2n}}(E_{k,l} - E_{l,k})$, $\frac{i}{\sqrt{2n}}(E_{k,l} + E_{l,k})$, $1 \leq k < l \leq n$, and $\frac{i}{\sqrt{n}} E_{k,k}$, $1 \leq k \leq n$.

\begin{remark}
    Unitary Brownian motion is usually defined using a different normalisation, i.e. satisfying the SDE $\d \tilde{U}_n(t) = \tilde{U}_n(t) \d B_n(t) - \frac{1}{2}\tilde{U}_n(t)$. With this normalisation the generator is given by one half times the Laplacian on $U(n)$, which is the usual definition of Brownian motion on a Riemannian manifold. The relation between the two normalisations is $\tilde{U}_n(2t) = U_n(t)$.
\end{remark}

In this paper we always consider unitary Brownian motion started from Haar measure on $U(n)$, which is its stationary distribution. Thus $U_n(t)$ is Haar distributed for all $t \geq 0$.

\subsection{Sobolov spaces and their Tensor Product} \label{section:Sobolev}

Consider the space of square integrable $\mathbb{C}$-valued functions on the unit circle, with vanishing mean:
\begin{equation*}
    L^2_0(S^1)= \left\{ f(\theta) = \sum_{k \in \mathbb{Z}} f_k e^{ik\theta} : \sum_{k \in \mathbb{Z}} |f_k|^2 < \infty, f_0=0 \right\}.
\end{equation*}

For $s \geq 0$, we define $H^s_0(S^1)$ as the restriction of $L^2_0(S^1)$ w.r.t. the functions for which the inner product
\begin{equation*}
    \langle f, g \rangle_s = \sum_{k \in \mathbb{Z}} |k|^{2s} f_k\overline{g_k}
\end{equation*}
is finite. For $s \leq 0$, we define $H^s_0(S^1)$ as the completion of $L^2_0(S^1)$ w.r.t. this scalar product. Note that $\left(H_0^s(S^1), \langle \cdot , \cdot \rangle_s \right)$ is a Hilbert space for all $s \in \mathbb{R}$. For $s \geq 0$ it is a subspace of $H^0_0(S^1) = L_0^2(S^1)$, i.e. the space of square-integrable functions with zero mean, while for $s < 0$, $H_0^s(S^1)$ can be interpreted as the dual space of $H_0^{-s}(S^1)$, i.e. as a space of generalized functions defined up to additive constant.\\

For $T>0$, and $s \in (0,1)$, we define the fractional Sobolev space $H^s([0,T])$ as the subspace of $L^2([0,T])$, where the Slobodeckij inner product
\begin{equation*}
(f, g)_s := \int_0^T f(t) \overline{g(t)} \text{d}t + \int_0^T\int_0^T \frac{(f(t)-f(u))(\overline{g(t)-g(u)} )}{|t-u|^{1+2s}} \d u\d t
\end{equation*}
is finite. Note that $(H^s([0,T]),(\cdot, \cdot)_s)$ is a Hilbert space for all $s > 0$.

\begin{remark}
For the fact that the fractional Sobolev spaces defined through Fourier series or through the Slobodeckij norm agree, the reader can consult e.g. \cite{Hitchhiker}.
\end{remark}

For $s > 0$ and $\epsilon > 0$ we let $H^s([0,T]) \otimes H^{-\epsilon}_0(S^1)$ denote the tensor product of Hilbert spaces $H^s([0,T])$ and $H^{-\epsilon}_0(S^1)$. Since the inner product on that space is determined by
\begin{align*}
\langle f\otimes g, h\otimes k\rangle_{s,-\epsilon}
&=
(f,h)_s \langle g, k\rangle_{-\epsilon}\\
&= \int_0^T f(t)\overline{h(t)} \text{d}t \langle  g, k \rangle_{-\epsilon}
+\int_0^T\int_0^T \frac{ (f(t) -f(u) )  \overline{(h(t) -h(u) )} }{|t-u|^{1+2s}} \d u\d t \langle  g, k \rangle_{-\epsilon}, \\
&= \int_0^T \langle f(t) g, h(t)k \rangle_{-\epsilon} \text{d}t
+\int_0^T\int_0^T \frac{\langle (f(t) -f(u) )g , (h(t) -h(u) )k \rangle_{-\epsilon}}{|t-u|^{1+2s}} \d u\d t,
\end{align*}
we obtain
\[
\langle F,G\rangle_{s,-\epsilon}=\int_0^T \langle F(t,\cdot),G(t,\cdot)\rangle_{-\epsilon} \d t+  \int_0^T\int_0^T \frac{\langle F(t,\cdot)-F(u,\cdot), G(t,\cdot)-G(u,\cdot)\rangle_{-\epsilon}}{|t-u|^{1+2s}} \d u\d t,
\]
first when $F$ and $G$ are linear combinations of pure tensor products, and then for all $F,G\in H^s([0,T])\otimes H^{-\epsilon}_0(S^1)$ by density and continuity.

\section{Proof of the main result Theorem \ref{thm:main}}

The proof strategy is as in the stationary case in \cite{HughesKeatingOConnell}: we treat $\left( \log p_n \right)_{n \in \mathbb{N}}$ as a sequence in $H^{s}([0,T])\otimes  H^{-\epsilon}_0(S^1) $, and show that if any of its subsequences has a limit then that limit has to be $X$. We do this by showing that the finite-dimensional distributions of $\left( \log p_n \right)_{n \in \mathbb{N}}$, i.e. the distributions of finite sets of Fourier coefficients at a finite number of times, converge to those of $X$. We then show that the set $\left( \log p_n \right)_{n \in \mathbb{N}}$ is tight in $H^s([0,T])\otimes  H^{-\epsilon}_0(S^1) $. Since $H^{s}([0,T])\otimes  H^{-\epsilon}_0(S^1) $ is complete and separable, Prokhorov's theorem implies that the closure of $\left(\log p_n\right)_{n \in \mathbb{N}}$ is sequentially compact w.r.t. the topology of weak convergence. In particular this means that every subsequence of $\left( \log p_n \right)_{n \in \mathbb{N}}$ has a weak limit $H^{s}([0,T])\otimes  H^{-\epsilon}_0(S^1) $. Since any such limit has to be $X$ it follows that the whole sequence $\left(\log p_n \right)_{n \in \mathbb{N}}$ must converge weakly to $X$.\\

We recall that
\begin{equation*}
\log (1 - z) = - \sum_{k = 1}^\infty \frac{z^k}{k}
\end{equation*}
for $|z| \leq 1$, where for $z = 1$ both sides equal $-\infty$. By using the identity $\log \det = \Tr \log$ we see that the Fourier expansion of $\log p_n$ w.r.t. the spacial variable $\theta$ is given as follows:
\begin{align*}
\begin{split}
\log p_n(t,\theta) =& - \sum_{k = 1}^{\infty} \frac{\Tr (U_n^k(t))}{k} e^{-ik\theta}.
\end{split}
\end{align*}

\begin{lemma}
    \label{lemma:finite dimensionial}
    Let $((\log p_n)_k(t))_{k\geq 1}$ be the Fourier coefficients of $(\log p_n)(t, \cdot )$.
    The process $(t,k) \mapsto (\log p_n)_k(t) $ converges in finite-dimensional distributions towards the complex Ornstein-Uhlenbeck process $(t,k)\mapsto A_k(t)$ defined in (\ref{eqn:SDEA}).
\end{lemma}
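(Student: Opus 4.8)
The plan is to deduce the lemma from Spohn's theorem (Theorem~\ref{thm:Spohn}), which already delivers Gaussianity of the limit, and then to pin down the limiting process using the SDE appearing there. From the Fourier expansion $\log p_n(t,\theta) = -\sum_{k\geq1}\frac{\Tr(U_n^k(t))}{k}e^{-ik\theta}$ one has $(\log p_n)_k(t) = -\tfrac1k\Tr(U_n^k(t))$, and since $\Tr(U_n^k(t)) = \sum_{j}e^{ik\theta_j(t)} = \xi_n(t,\cos(k\cdot)) + i\,\xi_n(t,\sin(k\cdot))$, with $\theta\mapsto\cos(k\theta)$ and $\theta\mapsto\sin(k\theta)$ real, smooth and of mean zero (hence in $H_0^{3/2+\epsilon}(S^1,\mathbb{R})$), I would write $(\log p_n)_k(t) = -\tfrac1k\xi_n(t,\cos(k\cdot)) - \tfrac{i}{k}\xi_n(t,\sin(k\cdot))$ and thereby reduce everything to statements about Spohn's linear statistics $\xi_n$.

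Spohn's theorem gives $\xi_n\Rightarrow\xi$ in $\mathcal{C}(\mathbb{R},H_0^{-3/2-\epsilon}(S^1,\mathbb{R}))$ with the locally uniform topology, where $\xi$ is the (unique in law) stationary solution of the stated SDE. For any finite set of times $t_1,\dots,t_m$ and indices $k_1,\dots,k_r$, the map $w\mapsto\big(-\tfrac1{k_i}\langle w(t_j),\cos(k_i\cdot)\rangle - \tfrac{i}{k_i}\langle w(t_j),\sin(k_i\cdot)\rangle\big)_{i,j}$ is continuous on that space — evaluation at a fixed time is continuous for locally uniform convergence, and pairing with a fixed element of $H_0^{3/2+\epsilon}$ is continuous on $H_0^{-3/2-\epsilon}$ — so the continuous mapping theorem yields convergence in distribution of $\big((\log p_n)_{k_i}(t_j)\big)_{i,j}$ to the Gaussian vector $\big(-\tfrac1{k_i}(\xi(t_j,\cos(k_i\cdot)) + i\,\xi(t_j,\sin(k_i\cdot)))\big)_{i,j}$. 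This is the desired finite-dimensional convergence to \emph{some} limit; it remains to recognise that limit as the $A_k$ of (\ref{eqn:SDEA}).

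For the identification, apply Spohn's SDE with $f=\cos(k\cdot)$, using $\sqrt{-\partial_\theta^2}\cos(k\cdot)=k\cos(k\cdot)$ and $(\cos(k\cdot))'=-k\sin(k\cdot)$, to get $\d\xi(t,\cos(k\cdot)) = -k\,\xi(t,\cos(k\cdot))\,\d t - k\,\d\mathcal{W}(t,\sin(k\cdot))$, and symmetrically $\d\xi(t,\sin(k\cdot)) = -k\,\xi(t,\sin(k\cdot))\,\d t + k\,\d\mathcal{W}(t,\cos(k\cdot))$. The white-noise covariance in Theorem~\ref{thm:Spohn}, together with $\frac1{2\pi}\int_0^{2\pi}\cos^2(k\theta)\,\d\theta = \frac1{2\pi}\int_0^{2\pi}\sin^2(k\theta)\,\d\theta = \tfrac12$ and the vanishing, for $k\neq k'$, of $\int_0^{2\pi}\cos(k\theta)\sin(k'\theta)\,\d\theta$, $\int_0^{2\pi}\cos(k\theta)\cos(k'\theta)\,\d\theta$ and $\int_0^{2\pi}\sin(k\theta)\sin(k'\theta)\,\d\theta$, shows that $\{\mathcal{W}(\cdot,\cos(k\cdot)),\mathcal{W}(\cdot,\sin(k\cdot))\}_{k\geq1}$ is a family of mutually independent standard real Brownian motions. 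Setting $W_k:=\mathcal{W}(\cdot,\sin(k\cdot))$, $\tilde W_k:=-\mathcal{W}(\cdot,\cos(k\cdot))$, and $A_k(t):=-\tfrac1k(\xi(t,\cos(k\cdot)) + i\,\xi(t,\sin(k\cdot)))$, a one-line computation gives $\d A_k(t) = -k A_k(t)\,\d t + \d(W_k(t) + i\tilde W_k(t))$; the $A_k$ are independent over $k$; and, $\xi$ being \emph{stationary}, $A_k(0)$ is distributed as the invariant law of this linear SDE, i.e.\ a complex Gaussian whose real and imaginary parts are independent centred Gaussians of variance $\tfrac1{2k}$. This is precisely (\ref{eqn:SDEA}). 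As a sanity check, Corollary~\ref{thm:multi time} applied to the trigonometric statistics $\sum_j\cos(k\theta_j),\sum_j\sin(k\theta_j)$ and combined bilinearly gives $\mathbb{E}[(\log p_n)_k(0)\overline{(\log p_n)_{k'}(t)}]\to\delta_{k,k'}e^{-kt}/k=\mathbb{E}[A_k(0)\overline{A_{k'}(t)}]$, with the non-conjugated moments tending to $0$.

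I do not expect a deep obstacle here: Gaussianity is imported wholesale from Spohn, and the rest is recognising a stationary Gaussian process through its defining SDE. The only points needing care are (i) checking that $\cos(k\cdot),\sin(k\cdot)$ are admissible test functions for Spohn's theorem and that the evaluation/pairing functional is continuous for the locally uniform topology on $\mathcal{C}(\mathbb{R},H_0^{-3/2-\epsilon}(S^1,\mathbb{R}))$ — so that the continuous mapping theorem is legitimate — and (ii) tracking the normalisation constants (the factor $2$ and the $\tfrac1{2\pi}$ in Spohn's white noise, the derivative factor $k$, and the stationary variance $\sigma^2/(2k)$ of an Ornstein--Uhlenbeck process) so as to land on exactly the variance $\tfrac1{2k}$ and the independence structure of (\ref{eqn:SDEA}).
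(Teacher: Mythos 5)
Your proposal is correct and follows essentially the same route as the paper: reduce $(\log p_n)_k(t)$ to Spohn's linear statistics, apply Theorem \ref{thm:Spohn} with the continuous mapping theorem for the finite-dimensional convergence, and then identify the limit by plugging the $k$-th trigonometric test functions into Spohn's SDE to recover the complex Ornstein--Uhlenbeck dynamics \eqref{eqn:SDEA}. Your write-up merely spells out in real/imaginary coordinates (and with the explicit white-noise covariance and stationary-variance bookkeeping) what the paper does more tersely with $e_k$.
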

\begin{proof}
    We prove convergence of the finite-dimensional distributions by showing that for any $l \in \mathbb{N}$ and $0 \leq t_1 < t_2 < ... < t_l \leq T$, as $n \rightarrow \infty$:
    \begin{align*} \label{eqn:finiDim}
    &\Big( (\log p_n)_{1}(t_1), ..., (\log p_n)_{l}(t_1), (\log p_n)_{1}(t_2), ..., (\log p_n)_{l}(t_2), ..., (\log p_n)_{1}(t_l),..., (\log p_n)_{l}(t_l) \Big) \nonumber\\
    &\hspace{4cm}
    \overset{(d)}\longrightarrow  \Big(A_1(t_1),  ...,  A_l(t_1), A_1(t_2), ..., A_l(t_2), ..., A_1(t_l),  ..., A_l(t_l) \Big).
    \end{align*}
    Let $e_k:\theta\mapsto e^{i k\theta}$. Then, using the notations of Theorem \ref{thm:Spohn},  $\log(p_n)_k(t)=\xi_n\big(t,\tfrac{e_k}{k}\big)$. Thus Spohn's theorem, combined with the continuous mapping theorem with the appropriate continuous map $\mathcal{C}(\mathbb{R}, H^{-3/2-\epsilon}(S^1, \mathbb{R} )) \rightarrow \mathbb{R}^{l^2}$, implies that
    \begin{align*}
        &\Big( (\log p_n)_{1}(t_1), ..., (\log p_n)_{l}(t_1), (\log p_n)_{1}(t_2), ..., (\log p_n)_{l}(t_2), ..., (\log p_n)_{1}(t_l),..., (\log p_n)_{l}(t_l) \Big) \nonumber\\
        &\hspace{2cm}
        \overset{(d)}\longrightarrow  \Big( \xi\big(t_1, \tfrac{e_1}{1}\big),\dots, \xi\big(t_1, \tfrac{e_l}{l}\big), \xi\big(t_2, \tfrac{e_1}{1}\big),\dots, \xi\big(t_2, \tfrac{e_l}{l}\big), ...\xi\big(t_l, \tfrac{e_1}{1}\big),\dots, \xi\big(t_l, \tfrac{e_l}{l}\big)\Big).
    \end{align*}
    Combining the real and imaginary part of $e_k$, we obtain that the SDE for $\xi(\cdot, \frac{e_k}{k})$ reduces to
    \[
 \d  \xi \left(t, \frac{e_k}{k} \right)= -k \xi\left(t, \frac{e_k}{k}\right) \d t +  dB_k(t),
    \]
    where $B_k$ is a complex Brownian motion, i.e. a process whose real and imaginary parts are independent standard Brownian motions. Besides, the Brownian motions $(B_k)_{k\geq 0}$ are independent, so that $(A_k)_{k\geq 1}$ and $\big(\xi\big(\cdot, \tfrac{e_k}{k} \big)\big)_{k\geq 1}$ are equal in distribution, which concludes the proof.
\end{proof}

We proceed to show tightness of $\left( \log p_n \right)_{n \in \mathbb{N}}$ in $H^s([0,T]) \otimes H^{-\epsilon}_0(S^1)$, i.e. for every $\delta > 0$ we construct a compact $K_\delta \subset H^s([0,T]) \otimes H^{-\epsilon}_0(S^1)$ for which
\begin{equation*}
\sup_{n \in \mathbb{N}} \mathbb{P} \left( \log p_n \in K_\delta^c \right) < \delta.
\end{equation*}
We let $0 < s' < \epsilon'$ such that $0 < s < s' < \epsilon' < \epsilon$, and choose
\begin{align*}
    K_\delta = \left\{ F \in H^{s}([0,T])\otimes H_0^{-\epsilon}(S^1): ||F||^2_{s',-\epsilon'} \leq C_\delta \right\},
\end{align*}
for a $C_\delta$ depending on $\delta$. By Lemma \ref{lemma:compact inclusion} below we see that $K_\delta$ is compact in $H^{s}([0,T])\otimes H_0^{-\epsilon}(S^1)$, and by Lemma \ref{lemma:boundedness} below we see that $\sup_{n \in \mathbb{N}} \mathbb{E} \left( ||\log p_n||^2_{s',-\epsilon'} \right) < \infty$. Thus, when choosing $C_\delta$ large enough, we see that
\begin{align*}
    \sup_{n \in \mathbb{N}} \mathbb{P} \left( \left( \log p_n \right) \in K_\delta^c \right) =& \sup_{n \in \mathbb{N}} \mathbb{P} \left( || \log p_n ||_{s,-\epsilon}^2 > C_\delta \right) \\
    \leq& \frac{\sup_{n \in \mathbb{N}} \mathbb{E} \left( ||\log p_n||^2_{s',-\epsilon'} \right)}{C_\delta^2} \\
    <& \delta,
\end{align*}
which shows tightness of $\log p_n$ and thus together with Lemma \ref{lemma:finite dimensionial} proves our Theorm \ref{thm:main}.
\begin{lemma}\label{lemma:compact inclusion}
    Let $0 < s < s' < \epsilon' < \epsilon$. Then, the inclusion of $H^{s'}([0,T])\otimes H_0^{-\epsilon'}(S^1)$ into  $H^{s}([0,T])\otimes H_0^{-\epsilon}(S^1)$ is compact.
\end{lemma}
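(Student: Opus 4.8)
The plan is to reduce the compactness of the inclusion on the tensor product to the compactness of the factor inclusions, using an explicit orthonormal basis of the tensor space and a diagonalization argument. First I would recall that $H^{s'}([0,T])$ embeds compactly into $H^{s}([0,T])$ whenever $0<s<s'$, and that $H_0^{-\epsilon'}(S^1)$ embeds compactly into $H_0^{-\epsilon}(S^1)$ whenever $\epsilon'<\epsilon$ (the latter is immediate from the Fourier description: the inclusion sends the orthonormal basis $(|k|^{\epsilon'}e_k)_{k\neq 0}$ of $H_0^{-\epsilon'}$ to the vectors $|k|^{\epsilon'-\epsilon}\cdot(|k|^{\epsilon}e_k)$, whose norms $|k|^{\epsilon'-\epsilon}\to 0$, so the inclusion is a norm-limit of finite-rank operators, hence compact; the former follows similarly after diagonalizing the Slobodeckij form, or can simply be quoted from \cite{Hitchhiker}). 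Let me write $\iota_1\colon H^{s'}([0,T])\to H^{s}([0,T])$ and $\iota_2\colon H_0^{-\epsilon'}(S^1)\to H_0^{-\epsilon}(S^1)$ for these two compact inclusions.

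The key step is then the general functional-analytic fact that if $\iota_1\colon E_1\to F_1$ and $\iota_2\colon E_2\to F_2$ are compact operators between Hilbert spaces, then $\iota_1\otimes\iota_2\colon E_1\otimes E_2\to F_1\otimes F_2$ is compact. I would prove this by approximating: write $\iota_1=\lim_m P_m$ and $\iota_2=\lim_m Q_m$ in operator norm, where $P_m,Q_m$ are finite-rank (possible since compact operators on Hilbert spaces are exactly the norm-limits of finite-rank operators, e.g. via the singular value decomposition). Then $P_m\otimes Q_m$ is finite-rank on $E_1\otimes E_2$, and
\[
\|\iota_1\otimes\iota_2 - P_m\otimes Q_m\| \leq \|(\iota_1-P_m)\otimes\iota_2\| + \|P_m\otimes(\iota_2-Q_m)\| \leq \|\iota_1-P_m\|\,\|\iota_2\| + \|P_m\|\,\|\iota_2-Q_m\|,
\]
using the multiplicativity of the operator norm under Hilbert-space tensor products, $\|A\otimes B\|=\|A\|\,\|B\|$. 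Since $\|P_m\|$ is bounded (it converges to $\|\iota_1\|$) and both $\|\iota_1-P_m\|$ and $\|\iota_2-Q_m\|$ tend to zero, $\iota_1\otimes\iota_2$ is a norm-limit of finite-rank operators, hence compact. Finally, I would observe that the inclusion $H^{s'}([0,T])\otimes H_0^{-\epsilon'}(S^1)\hookrightarrow H^{s}([0,T])\otimes H_0^{-\epsilon}(S^1)$ is precisely $\iota_1\otimes\iota_2$ (it agrees with the tensor product of the inclusions on pure tensors, and both are bounded linear maps, so they agree everywhere by density), which completes the argument.

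The main obstacle is not conceptual but a matter of care: one has to make sure the completed (Hilbert) tensor product is the right category throughout — in particular that $\|A\otimes B\|=\|A\|\,\|B\|$ genuinely holds for the Hilbert-space tensor norm (it does, and is standard), and that the two descriptions of $H^s([0,T])$, via Fourier series and via the Slobodeckij seminorm, are compatible enough that the compactness of $\iota_1$ is legitimately available; the latter is exactly the content of the remark in Section~\ref{section:Sobolev} citing \cite{Hitchhiker}, so I would simply invoke it. Alternatively, and perhaps more cleanly, one can avoid the abstract tensor lemma altogether by working with the orthonormal basis $\{\phi_j\otimes(|k|^{\epsilon'}e_k)\}$ of $H^{s'}([0,T])\otimes H_0^{-\epsilon'}(S^1)$ obtained from an orthonormal eigenbasis $(\phi_j)$ of the compact self-adjoint operator realizing $\iota_1^*\iota_1$, computing directly that the inclusion sends these basis vectors to an orthogonal family whose norms form a null sequence (being products $\mu_j^{1/2}|k|^{\epsilon'-\epsilon}$ of two null sequences), and concluding via the standard criterion that an operator mapping an orthonormal basis to a family with square-summable — in fact merely null, given orthogonality — images is compact.
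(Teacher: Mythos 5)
Your proposal is correct and follows essentially the same route as the paper: compactness of the two factor inclusions (Kondrachov/Rellich in $t$, the Fourier-diagonal argument, which the paper gets instead by dualizing the compact inclusion $H_0^{\epsilon}(S^1)\hookrightarrow H_0^{\epsilon'}(S^1)$), combined with the fact that the Hilbert-space tensor product of compact operators is compact, which the paper simply cites while you prove it by finite-rank approximation.
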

\begin{proof}
    From the Kondrachov embedding theorem, the inclusion $\iota_1$ of $H^{s'}([0,T)$ into $H^{s}([0,T])$ is compact, as well as the inclusion $\iota_2$ from $H_0^{\epsilon}(S^1)$ into $H_0^{\epsilon'}(S^1)$. Then the dual operator \\ $\iota_2^*: H_0^{-\epsilon'}(S^1) \rightarrow H_0^{-\epsilon}(S^1)$ is also compact. On Hilbert spaces, the tensor product of two compact operators is also compact (see e.g. \cite{Kubrusly} \footnote{In \cite{Kubrusly}, the result is stated for endomorphisms, but this extra assumption is not used in the proof.}), so that $\iota_1\otimes \iota_2^*$ is compact indeed.
\end{proof}

\begin{lemma}\label{lemma:boundedness}
    For all $s \in \big(0,\frac{1}{2}\big)$ and all $\epsilon > s$, it holds that $\sup_{n \in \mathbb{N}} \mathbb{E} \left( ||\log p_n||^2_{s,-\epsilon} \right) < \infty$.
\end{lemma}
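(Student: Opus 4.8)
The plan is to expand $\|\log p_n\|_{s,-\epsilon}^2$ using the tensor-product inner product formula derived in Section~\ref{section:Sobolev}, and reduce the whole estimate to the two-time covariance formula of Corollary~\ref{thm:multi time}. Writing $\log p_n(t,\theta) = -\sum_{k\geq 1} k^{-1}\Tr(U_n^k(t)) e^{-ik\theta}$, the Fourier coefficient in $\theta$ of index $k$ is $F_k(t) := -k^{-1}\Tr(U_n^k(t))$ (and $0$ for $k\le 0$, since $p_n$ has mean-zero log in $\theta$ after removing the constant term — one should note the $k=0$ term is constant in $\theta$ and lands in the quotient, so it does not contribute to the $H_0^{-\epsilon}$ norm). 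Then
\[
\|\log p_n\|_{s,-\epsilon}^2 = \sum_{k\geq 1} k^{-2\epsilon}\left( \int_0^T |F_k(t)|^2\d t + \int_0^T\int_0^T \frac{|F_k(t)-F_k(u)|^2}{|t-u|^{1+2s}}\d u\d t\right),
\]
so taking expectations, everything is controlled by the quantities $\mathbb{E}|F_k(t)|^2$ and $\mathbb{E}|F_k(t)-F_k(u)|^2 = \mathbb{E}|F_k(t)|^2 + \mathbb{E}|F_k(u)|^2 - 2\Re\mathbb{E}(F_k(t)\overline{F_k(u)})$. Since $U_n$ is stationary, $\mathbb{E}|F_k(t)|^2 = k^{-2}\mathbb{E}|\Tr U_n^k|^2$ is time-independent, and $\mathbb{E}(F_k(t)\overline{F_k(u)})$ depends only on $|t-u|$; both are instances of Corollary~\ref{thm:multi time} with $f = g = k^{-1}e_k$ (so $f_k = k^{-1}$, $g_{-k}=k^{-1}$, all other coefficients zero). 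Concretely the corollary gives, for $|t-u| = \tau$,
\[
\mathbb{E}\big(F_k(t)\overline{F_k(u)}\big) = \frac{1}{k^2}\cdot\begin{cases} e^{-k\tau}\dfrac{\sinh(k^2\tau/n)}{\sinh(k\tau/n)}, & 1\le k\le n-1,\\[2mm] e^{-k^2\tau/n}\dfrac{\sinh(k\tau)}{\sinh(k\tau/n)}, & k\ge n,\end{cases}
\]
and in particular $\mathbb{E}|F_k(t)|^2 = k^{-2}\min(k,n)$ (take $\tau\to 0$).

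The core of the argument is then a deterministic estimate: show that
\[
\sum_{k\ge1} k^{-2\epsilon}\Big( T\,\mathbb{E}|F_k(0)|^2 + \int_0^T\int_0^T \frac{\mathbb{E}|F_k(t)-F_k(u)|^2}{|t-u|^{1+2s}}\d u\d t\Big) \le C(s,\epsilon,T) < \infty
\]
uniformly in $n$. The first piece is immediate: $\sum_k k^{-2\epsilon}\cdot k^{-2}\min(k,n) \le \sum_k k^{-1-2\epsilon} < \infty$ since $\epsilon>0$. For the double integral, the key is a bound of the form
\[
\mathbb{E}|F_k(t)-F_k(u)|^2 \le C\, k^{-2}\,\min\!\big(\min(k,n)^2|t-u|^{?},\ \min(k,n)\big),
\]
or more simply a Hölder-type estimate $\mathbb{E}|F_k(t)-F_k(u)|^2 \le C k^{-2}\min(k,n)^{1+2s}|t-u|^{2s}$ combined with the trivial bound $\le 2 k^{-2}\cdot 2\min(k,n)$; interpolating these and plugging into $\int\int |t-u|^{-1-2s}$ (which converges near the diagonal precisely because of the $|t-u|^{2s}$ gain, since $s<1/2$) should yield a summand of order $k^{-2\epsilon}\cdot k^{-2}\cdot\min(k,n)$, hence a convergent sum. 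The constraint $\epsilon > s$ and $s<1/2$ must enter exactly here: $s<1/2$ is what makes the temporal singularity integrable after the Hölder gain, and $\epsilon>s$ is what absorbs the extra factor $\min(k,n)^{2s}\le k^{2s}$ coming from the temporal modulus of continuity of $\Tr U_n^k$ (which oscillates at frequency $\sim k$).

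The main obstacle I expect is proving the sharp temporal continuity estimate $\mathbb{E}|F_k(t)-F_k(u)|^2 \le C k^{-2}\min(k,n)^{1+2s}|t-u|^{2s}$ uniformly in $n$, or more precisely extracting the right power of $k$ (and of $\min(k,n)$) from the explicit formula in Corollary~\ref{thm:multi time}. One has to carefully analyze $2k^{-2}\min(k,n) - 2\Re\,\mathbb{E}(F_k(t)\overline{F_k(u)})$ for $\tau = |t-u|$ small, in both regimes $k\le n-1$ and $k\ge n$, and show it is $O(k^{-2}\min(k,n)^{1+2s}\tau^{2s})$; for small $\tau$ the natural Lipschitz bound would be $O(k^{-2}\min(k,n)\cdot k\tau) = O(k^{-2}\min(k,n)\cdot k\tau)$, which is too lossy for large $k$ unless truncated, so one interpolates between this and the bounded estimate, giving exponent $2s<1$ at the cost of $(k\tau)^{2s}$ — and it is the resulting $k^{2s}$ that forces $\epsilon>s$. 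I would handle the regime $k\gtrsim n$ separately, where $\sinh(k\tau/n)$ in the denominator is comparable to $k\tau/n$ for bounded $k\tau/n$ but the numerator $\sinh(k\tau)$ is genuinely exponential; here summability in $k$ is easy (the sum $\sum_{k\ge n} k^{-1-2\epsilon}$ tail is small) but the continuity modulus needs its own careful check. Assembling these pieces, Fubini (justified by positivity) lets us exchange sum and integral throughout, completing the bound.
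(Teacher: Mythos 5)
Your overall skeleton is the same as the paper's: expand in the $\theta$-Fourier basis, use stationarity to reduce everything to $\mathbb{E}\lvert\Tr U_n^k\rvert^2=k\wedge n$ and the two-time covariance from Corollary~\ref{thm:multi time}, and then check summability under $s<1/2$, $\epsilon>s$. The $L^2$-in-time piece is handled correctly. But the heart of the lemma --- the short-time estimate for $\mathbb{E}\lvert\Tr U_n^k(t)-\Tr U_n^k(u)\rvert^2$ --- is exactly what you defer (``the main obstacle I expect''), so the proof is not actually complete; and the specific route you sketch for it does not close. If you interpolate between the trivial bound $O(k\wedge n)$ and a Lipschitz-type bound to get a H\"older modulus with exponent exactly $2s$, i.e.\ $\mathbb{E}\lvert F_k(t)-F_k(u)\rvert^2\lesssim k^{-2}(k\wedge n)\,k^{2s}\lvert t-u\rvert^{2s}$, then plugging into the Slobodeckij kernel gives $\int_0^T \tau^{2s}\,\tau^{-1-2s}\,\mathrm{d}\tau=\int_0^T \tau^{-1}\,\mathrm{d}\tau=\infty$: the gain $\lvert t-u\rvert^{2s}$ exactly cancels the singularity and leaves a logarithmic divergence at the diagonal, so your claim that the double integral ``converges near the diagonal precisely because of the $\lvert t-u\rvert^{2s}$ gain'' is false as stated. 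The hypothesis $s<1/2$ is not what saves an exponent-$2s$ gain; it is what makes an exponent-$1$ (Lipschitz) gain integrable against $\tau^{-1-2s}$.

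Two ways to repair this. The paper's way: split the time integral at $\tau=k^{-1}$; for $\tau\geq k^{-1}$ use the trivial bound $4(k\wedge n)$, giving a contribution of order $(k\wedge n)k^{2s}$ (this is where $\epsilon>s$ enters); for $\tau\leq k^{-1}$ extract from the explicit covariance $e^{-k(k\vee n)\tau/n}\sinh\!\left(\tfrac{k(k\wedge n)\tau}{n}\right)/\sinh\!\left(\tfrac{k\tau}{n}\right)$, via the elementary inequalities $\sinh x\geq x$ and $1/\sinh x\geq 1/x-x/6$, the linear bound $\mathbb{E}\lvert\Tr U_n^k(\tau)-\Tr U_n^k(0)\rvert^2\leq 4k^2\tau$, whose integral $\int_0^{1/k}4k^2\tau^{-2s}\,\mathrm{d}\tau\asymp k^{1+2s}$ converges precisely because $s<1/2$; both contributions are then summable against $k^{-2-2\epsilon}$ iff $\epsilon>s$. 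Alternatively, your interpolation idea can be salvaged by taking a H\"older exponent $2s''$ with $s<s''<\min(\epsilon,1/2)$ strictly larger than $2s$, which restores integrability at the diagonal and produces a factor $k^{2s''}$ still absorbed by $\epsilon>s''$. Either way, the derivation of the quantitative short-time bound from Corollary~\ref{thm:multi time} (uniformly in $n$, in both regimes $k<n$ and $k\geq n$) is the substantive step, and it is missing from your proposal.
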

\noindent \textbf{Proof:} We see that
\begin{align*}
    \mathbb{E} \left( ||\log p_n||^2_{s,-\epsilon} \right) =& \mathbb{E} \left( \int_0^T ||\log p_n(\cdot , t)||_{-\epsilon}^2 \text{d}t \right) + \mathbb{E} \left( \int_0^T \int_0^T \frac{||  \log p_n(\cdot, t) -  \log p_n(\cdot, r)||^2_{-\epsilon}}{|t-r|^{2s + 1}} \text{d}r \text{d}t \right).
\end{align*}
For the first summand it holds that (with $k \wedge n$ denoting $\min \{k,n\}$)
\begin{align*}
\begin{split}
\mathbb{E} \left( \int_0^T ||\log p_n(\cdot , t)||^2_{-\epsilon} \text{d}t \right) =& \int_0^T \mathbb{E} \left( \sum_{k=1}^\infty k^{-2\epsilon} \frac{ \left| \Tr (U_n(t)^k) \right|^2}{k^2} \right) \text{d}t \\
=& T \sum_{k = 1}^\infty k^{-2 - 2\epsilon} \mathbb{E} \left( \left| \Tr (U_n(0)^k) \right|^2 \right) \\
=& T \sum_{k = 1}^\infty k^{-2 - 2\epsilon} (k \wedge n) \\
< & T \sum_{k = 1}^\infty k^{-1-2\epsilon} < \infty.
\end{split}
\end{align*}
For the second summand it holds that:
\begin{align}
\label{eqn:split}
\begin{split}
    &\hspace{-2cm}\mathbb{E} \Big( \int_0^T \int_0^T \frac{||  \log p_n(\cdot, t) -  \log p_n(\cdot, r)||^2_{-\epsilon}}{|t-r|^{2s + 1}} \text{d}r \text{d}t \Big) \\
    =& \sum_{k = 1}^\infty k^{-2 - 2\epsilon} \int_0^T \int_0^T \frac{ \mathbb{E} \left( |\Tr (U_n^k(t) - U_n^k(r))|^2 \right) }{|t - r|^{2s+1}} \text{d}r \text{d}t\\
    \leq & CT\sum_{k = 1}^\infty k^{-2 - 2\epsilon} \int_0^T \frac{ \mathbb{E} \left( |\Tr (U_n^k(t) - U_n^k(0))|^2 \right) }{t^{2s+1}} \text{d}t \\
    \leq & CT\sum_{k = 1}^\infty k^{-2 - 2\epsilon} \int_0^{k^{-1}} \frac{ \mathbb{E} \left( |\Tr (U_n^k(t) - U_n^k(0))|^2 \right) }{t^{2s+1}} \text{d}t\\
    & + CT\sum_{k = 1}^\infty k^{-2 - 2\epsilon} \int_{k^{-1}}^{\infty} \frac{ 4 \mathbb{E} \left( |\Tr (U_n^k(0) )|^2 \right) }{t^{2s+1}} \text{d}t.
\end{split}
\end{align}
For the second summand in (\ref{eqn:split}) we get
\[
\int_{k^{-1}}^{\infty} \frac{ 4 \mathbb{E} \left( |\Tr (U_n^k(0) )|^2 \right) }{t^{2s+1}} \text{d}t
= 8 s (n\wedge k) k^{2s},
\]
which is sufficient since $\sum_{k = 1}^\infty k^{-2 - 2\epsilon+1+2s}$ is finite as soon as
$s < \epsilon $. \\

For the first sum in \eqref{eqn:split} we use Corollary \ref{thm:multi time}, which implies that for all $k \geq 1$
\begin{align*}
\begin{split}
    \mathbb{E}\left( \Tr (U_n^k(t)) \overline{\Tr (U_n^k(0))} \right) =& 1_{k < n} e^{-kt} \frac{ \sinh (\frac{k^2t}{n})}{\sinh (\frac{kt}n)} + 1_{k \geq n} e^{-\frac{k^2t}{n}} \frac{\sinh(kt)}{\sinh(\frac{kt}{n})} \\
    =& e^{-\frac{k(k \vee n) t}{n}} \frac{\sinh \left( \frac{k(k\wedge n)t}{n} \right)}{\sinh \left( \frac{kt}{n} \right)},
\end{split}
\end{align*}
with $k \vee n := \max \{k, n\}$. Using this, and the fact that $\sinh x \geq x$ and $1/\sinh x \geq 1/x - x/6$ for all $x > 0$, we see that for $t < k^{-1}$:
\begin{align*}
    &\mathbb{E} \left( |\Tr (U_n^k(t) - U_n^k(0))|^2 \right) \\
    =& \mathbb{E} \left( |\Tr (U_n^k(0))|^2 \right)+ \mathbb{E} \left( |\Tr (U_n^k(t))|^2 \right) - 2\mathbb{E} \left( \Tr (U_n^k(t)) \overline{\Tr (U_n^k(0))} \right) \\
    =& 2(k \wedge n) - 2e^{-\frac{k(k \vee n)t}{n}} \frac{\sinh \left( \frac{k(k\wedge n)t}{n} \right)}{\sinh \left( \frac{kt}{n} \right)} \\
    \leq & 2(k \wedge n) - 2e^{-\frac{k(k \vee n)t}{n}} \frac{k(k\wedge n)t}{n} \left( \left(\frac{kt}{n} \right)^{-1} -  \frac{kt}{6n} \right) \\
    = & 2(k \wedge n) - 2e^{-\frac{k(k \vee n)t}{n}} \left( k \wedge n - \frac{k^2t^2 (k \wedge n)}{6n^2} \right) \\
    =&  2e^{-\frac{k(k \vee n)t}{n}} \frac{k^2 t^2 (k \wedge n)}{6n^2} + 2(k \wedge n )(1 - e^{-\frac{k(k \vee n)t}{n}}) \\
    \leq & 2k^3t^2 + 2(k \wedge n) \frac{k(k \vee n)t}{n} \\
    \leq & 4k^2t.
\end{align*}
Thus we see that when $s < 1/2$, the first sum in \eqref{eqn:split} is bounded by $
    T \sum_{k = 1}^\infty k^{-1 - 2\epsilon + 2s}$,
which is finite for $s < \epsilon$. This finishes the proof. \qed \\

\section{A Wick-type identity} \label{section:Wick}

In this section we prove \ref{prop:orbits} below, which is about expectations of the form
\begin{align*}
&\mathbb{E} \Big( \Tr \left( H U^{\sigma_1} H U^{\sigma_2} \dots H U^{\sigma_j} \right) \overline{\Tr \left( H U^{\sigma_1} H U^{\sigma_2} \dots H U^{\sigma_j} \right)} \Big),
\end{align*}
where $\sigma_1,...,\sigma_j \in \mathbb{Z}$, $U \in U(n)$ is Haar-distributed and independent from $H$, which is a $GUE(n)$ matrix, i.e. $H_{ii} \sim \mathcal{N}(0,1)$ for $i = 1,...,n$, and $\Re H_{ij} = \Re H_{ji} \sim \mathcal{N}(0,1/2)$, $\Im H_{ij} = - \Im H_{ji} \sim \mathcal{N}(0,1/2)$ for $1 \leq i < j \leq n$, with entries being independent up to the Hermitian symmetry. 

Such expressions appear rather naturally when we consider powers of a unitary Brownian motion $U$, since for small $s$ it holds that $U_{t+s} \simeq (1+\sqrt{s}H)U_t$. \\

Let $C_{2j} = \{ \pi\in S_{2j}: \pi^2=\text{Id}, \forall l\in \{1,\dots, 2j\}, \pi(l)\neq l\}$, i.e. $C_{2j}$ is the set of pairings on $\{1,...,2j\}$. Then we see that
\begin{align} \label{eqn:C2j}
    &\mathbb{E} \Big( \Tr \left( H U^{\sigma_1} H U^{\sigma_2} \dots H U^{\sigma_j} \right) \overline{\Tr \left( H U^{\sigma_1} H U^{\sigma_2} \dots H U^{\sigma_j} \right)} \Big) \nonumber \\
    =& \sum_{i_1,...,i_{2j}, l_1,...,l_{2j}} \mathbb{E} \left( H_{i_1i_2} (U^{\sigma_1})_{i_2i_3} \cdots H_{i_{2j-1} i_{2j}} (U^{\sigma_j})_{i_{2j} i_1} \overline{H_{l_1l_2} (U^{\sigma_1})_{l_2l_3} \cdots H_{ l_{2j-1} l_{2j}} (U^{\sigma_j})_{l_{2j} l_1}}\right) \nonumber \\
    =& \sum_{i_1,...,i_{4j}} \mathbb{E} \left[ H_{i_1i_2} H_{i_3i_4} \cdots H_{i_{2j-1} i_{2j}} H_{i_{2j+1} i_{2j+2}} H_{i_{2j+3} i_{2j+4}} \cdots H_{i_{4j-1} i_{4j}}\right] \\
    &\hspace{2cm} \times \mathbb{E} \left[ (U^{\sigma_1})_{i_2i_3} (U^{\sigma_2})_{i_4i_5} \cdots (U^{\sigma_j})_{i_{2j}i_1} \overline{(U^{\sigma_1})_{i_{2j+1}i_{2j+4}} (U^{\sigma_2})_{i_{2j+3}i_{2j+6}} \cdots (U^{\sigma_j})_{i_{4j-1}i_{2j+2} }}\right] \nonumber \\
    =& \sum_{i_1,...,i_{4j}} \sum_{\pi\in C_{2j}} \mathbbm{1}_{\forall l\in \{1,\dots, 2j\}, (i_{2l-1},i_{2l})=(i_{2\pi(l)},i_{2\pi(l)-1} ) } \nonumber \\
    &\hspace{2cm} \times \mathbb{E} \left[ (U^{\sigma_1})_{i_2i_3} (U^{\sigma_2})_{i_4i_5} \cdots (U^{\sigma_j})_{i_{2j}i_1} \overline{(U^{\sigma_1})_{i_{2j+1}i_{2j+4}} (U^{\sigma_2})_{i_{2j+3}i_{2j+6}} \cdots (U^{\sigma_j})_{i_{4j-1}i_{2j+2} }} \right]. \nonumber
\end{align}

The condition $(i_{2l-1},i_{2l})=(i_{2\pi(l)},i_{2\pi(l)-1}) \, \, \forall l\in \{1,\dots, 2j\} \, \, \forall i_1,...,i_{4j} \in \{1,...,n\}$ allows to define a map $\pi \mapsto \tilde{\pi}$ from $C_{2j}$ to $C_{4j}$ by the formula
\begin{align*}
    \tilde{\pi}(2l-1) =& 2\pi(l), \quad \tilde{\pi}(2l) = 2 \pi(l) - 1, \quad l = 1,...,2j.
\end{align*}
Further we define the pairing $\rho \in C_{4j}$ as
\begin{align*}
    \rho := (23)(45) \cdots (2j, 1) (2j+1, 2j+4) (2j+3, 2j+6) \cdots (2j+2l-1, 2j+2l+2) \cdots (4j-1, 2j+2).
\end{align*}
See Example \ref{example:j = 2} for a list of the pairings $\pi$, $\tilde{\pi}$ and $\rho$, for $j=2$, and Figure \ref{figure:j = 2} for their depiction. \\

Note that $\rho$ and all pairings $\tilde{\pi}$ pair even numbers with odd numbers, thus $\tilde{\pi} \rho$ maps even numbers to even numbers and odd numbers to odd numbers.  Using the pairing $\tilde{\pi}$, the even numbers $i_2,i_4,...i_{4j}$ determine all the odd ones. Thus we see that
\begin{align} \label{eqn:C2j to sigma hat}
    &\sum_{i_1,...,i_{4j}} \sum_{\pi\in C_{2j}} \mathbbm{1}_{\forall l\in \{1,\dots, 2j\}, (i_{2l-1},i_{2l})=(i_{2\pi(l)},i_{2\pi(l)-1} ) } \nonumber \\
    &\hspace{2cm} \times \mathbb{E} \left[ (U^{\sigma_1})_{i_2i_3} (U^{\sigma_2})_{i_4i_5} \cdots (U^{\sigma_j})_{i_{2j}i_1} \overline{(U^{\sigma_1})_{i_{2j+1}i_{2j+4}} (U^{\sigma_2})_{i_{2j+3}i_{2j+6}} \cdots (U^{\sigma_j})_{i_{4j-1}i_{2j+2} }} \right] \nonumber \\
    =&\sum_{\pi\in C_{2j}} \sum_{i_1,...,i_{4j}} \mathbbm{1}_{\forall l\in \{1,\dots, 4j\}, i_{l} = i_{\tilde{\pi}(l)}} \mathbb{E} \left[ (U^{\sigma_1})_{i_2i_{\rho(2)}} \cdots (U^{\sigma_j})_{i_{2j}i_{\rho(2j)}} \overline{(U^{\sigma_1})_{i_{\rho(2j+4)} i_{2j+4}} \cdots (U^{\sigma_j})_{i_{\rho(2j+2)}i_{2j + 2} }} \right] \nonumber \\
    =&\sum_{\pi\in C_{2j}} \sum_{i_2,i_4,...,i_{4j}} \mathbb{E} \left[ (U^{\sigma_1})_{i_2i_{\tilde{\pi} \rho(2)}} \cdots (U^{\sigma_j})_{i_{2j}i_{\tilde{\pi} \rho(2j)}} \overline{(U^{\sigma_1})_{i_{ \tilde{\pi} \rho(2j+4)} i_{2j+4}} \cdots (U^{\sigma_j})_{i_{\tilde{\pi} \rho(2j+2)}i_{2j + 2} }} \right] \\
    =& \sum_{\pi \in C_{2j}} \mathbb{E} \left( \sum_{i_2,i_4,...i_{4j}} \prod_{l = 1}^{2j} (U^{\hat{\sigma}_{l}})_{i_{2l} i_{\tilde{\pi}\rho(2l)}} \right), \nonumber
\end{align}
where
\begin{align} \label{eqn:sigma hat}
    \hat{\sigma}_{l} = \begin{cases} \sigma_l, & l = 1,2,...,j, \\ -\sigma_{l-j-1}, & l = j + 2,...,2j, \\ -\sigma_{j}, & l = j+1. \end{cases}
\end{align}

By repeatedly applying $\tilde{\pi}\rho$ to $\{2,4,...,4j\}$, we get a partition of $\{2,4,...,4j\}$ into orbits. The set of these orbits we denote by $\mathcal{O}_{\tilde{\pi}\rho}$. We see that

\begin{align} \label{eqn:sigma hat to orbits}
\begin{split}
    & \sum_{\pi \in C_{2j}} \mathbb{E} \left( \sum_{i_2,i_4,...i_{4j}} \prod_{l = 1}^{2j} (U^{\hat{\sigma}_{2l}})_{i_{2l} i_{\tilde{\pi}\rho(2l)}} \right) \\
    =& \sum_{\pi \in C_{2j}} \mathbb{E} \left( \sum_{i_2,i_4,...i_{4j}} \prod_{o \in \mathcal{O}_{\tilde{\pi}\rho}} \prod_{w \in o} (U^{\hat{\sigma}_{w}})_{i_{w} i_{\tilde{\pi}\rho(w)}} \right) \\
    =& \sum_{\pi \in C_{2j}} \mathbb{E} \left( \prod_{o \in \mathcal{O}_{\tilde{\pi}\rho}} \Tr \left( \prod_{w \in o} U^{\hat{\sigma}_w} \right) \right) \\
    =& \sum_{\pi \in C_{2j}} \mathbb{E} \left( \prod_{o \in \mathcal{O}_{\tilde{\pi}\rho}} \Tr \left( U^{\sum_{w \in o} \hat{\sigma}_w} \right) \right).
\end{split}
\end{align}

Putting together (\ref{eqn:C2j}), (\ref{eqn:C2j to sigma hat}), (\ref{eqn:sigma hat}) and (\ref{eqn:sigma hat to orbits}), we have proven the following proposition:

\begin{proposition} \label{lemma:orbits}
Let $H$ be an $n \times n$ matrix from the $GUE(n)$, and let $U \in U(n)$ be independent and Haar-distributed. Then for $j \in \mathbb{N}$ and $\sigma_1, ..., \sigma_j \in \mathbb{N}$ it holds that
\begin{align*}
    &\mathbb{E} \Big( \Tr \left( H U^{\sigma_1} H U^{\sigma_2} \dots H U^{\sigma_j} \right) \overline{\Tr \left( H U^{\sigma_1} H U^{\sigma_2} \dots H U^{\sigma_j} \right)} \Big) \\
    =& \sum_{\pi \in C_{2j}} \mathbb{E} \left( \prod_{o \in \mathcal{O}_{\tilde{\pi}\rho}} \Tr \left( U^{\sum_{w \in o} \hat{\sigma}_w} \right) \right).
\end{align*}
\end{proposition}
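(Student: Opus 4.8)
The plan is to reduce the left-hand side to a sum over matrix entries, apply Wick's theorem to the Gaussian factor $H$, use the unitarity of $U$ on the remaining factor, and then recognise the resulting index contractions as traces of powers of $U$. First I would expand
\[
\Tr\!\left(HU^{\sigma_1}HU^{\sigma_2}\cdots HU^{\sigma_j}\right)
=\sum_{i_1,\dots,i_{2j}} H_{i_1i_2}(U^{\sigma_1})_{i_2i_3}H_{i_3i_4}(U^{\sigma_2})_{i_4i_5}\cdots H_{i_{2j-1}i_{2j}}(U^{\sigma_j})_{i_{2j}i_1},
\]
and treat the conjugated trace the same way, using $\overline{H_{ab}}=H_{ba}$ (Hermitian symmetry) and $\overline{(U^{\sigma})_{ab}}=(U^{-\sigma})_{ba}$ (from $U^{*}=U^{-1}$). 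Relabelling the $2j$ summation indices of the conjugated trace turns the product into one involving $4j$ indices, and by independence of $H$ and $U$ the expectation factors as the product of $\mathbb{E}[\text{product of }2j\text{ entries of }H]$ and $\mathbb{E}[\text{product of }2j\text{ entries of powers of }U]$.

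Next I would evaluate the Gaussian factor by Isserlis/Wick: the entries of a $GUE(n)$ matrix are jointly centred Gaussian with $\mathbb{E}[H_{ab}H_{cd}]=\delta_{ad}\delta_{bc}$, so the expectation of the product of the $2j$ entries of $H$ equals $\sum_{\pi\in C_{2j}}\prod_{l}\mathbb{E}[H_{\cdots}H_{\cdots}]$, and each summand is the indicator that the index pair carried by the $l$-th $H$ equals the reversal of the pair carried by the $\pi(l)$-th $H$. This is precisely the indicator appearing in \eqref{eqn:C2j}, which I would repackage as the single condition that the $4j$ indices are identified along an involution $\tilde\pi\in C_{4j}$ built from $\pi$ by $\tilde\pi(2l-1)=2\pi(l)$, $\tilde\pi(2l)=2\pi(l)-1$.

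Then I would handle the $U$-factor. The way consecutive powers of $U$ share indices inside the two (original and conjugated) traces is encoded by a fixed pairing $\rho\in C_{4j}$; after using $\tilde\pi$ to express every odd index in terms of an even one, each surviving $U$-entry becomes $(U^{\hat\sigma_l})_{i_{2l}\,i_{\tilde\pi\rho(2l)}}$, where $\hat\sigma_l$ as in \eqref{eqn:sigma hat} records the sign flips caused by conjugation and by the index reversal. Since $\tilde\pi$ and $\rho$ both pair an odd number with an even number, $\tilde\pi\rho$ stabilises $\{2,4,\dots,4j\}$; decomposing this set into the orbits of $\tilde\pi\rho$ and summing the free index within each orbit telescopes the corresponding chain of $U$-entries into a single trace, and since only the total exponent enters $\Tr(U^{m})$, each orbit $o$ contributes $\Tr\!\big(U^{\sum_{w\in o}\hat\sigma_w}\big)$. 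Pulling the expectation over $U$ back to the outside then yields the asserted identity.

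I expect the main obstacle to be purely combinatorial bookkeeping: pinning down the map $\pi\mapsto\tilde\pi$ and the fixed pairing $\rho$ exactly — including the two ``closing'' pairs $(2j,1)$ and $(4j-1,2j+2)$ that turn each string of $U$'s into a trace — and checking that the conjugation and re-indexing produce precisely the exponents $\hat\sigma_l$, so that an orbit sum becomes $\sum_{w\in o}\hat\sigma_w$ regardless of the order in which the factors occur. To fix all sign and index conventions safely, I would first carry out the case $j=2$, where $C_4$ has only three pairings, in full detail before writing the general argument.
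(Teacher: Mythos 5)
Your proposal is correct and follows essentially the same route as the paper: expand both traces over matrix entries, factor by independence, apply Wick's theorem to the GUE entries to obtain the sum over pairings $\pi\in C_{2j}$ with the index-reversal indicators, encode these via $\tilde\pi$ together with the fixed trace-structure pairing $\rho$, and sum out the odd indices so that the orbits of $\tilde\pi\rho$ on the even indices telescope into traces $\Tr\bigl(U^{\sum_{w\in o}\hat\sigma_w}\bigr)$. The remaining work you identify (fixing $\rho$, the map $\pi\mapsto\tilde\pi$, and the exponents $\hat\sigma_l$, checked first for $j=2$) is exactly the bookkeeping the paper carries out.
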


\begin{example}\label{example:j = 2} For $j = 2$ we see that $\rho = (14)(23)(58)(67)$, and (see Figure \ref{figure:j = 2})
\begin{align*}
\begin{split}
    &\pi = (12)(34), \quad \tilde{\pi} = (14)(23)(58)(67), \quad \tilde{\pi} \rho = (2)(4)(6)(8),  \\
    &\pi = (13)(24), \quad \tilde{\pi} = (16)(25)(38)(47), \quad \tilde{\pi} \rho = (28)(46),  \\
    &\pi = (14)(23), \quad \tilde{\pi} = (18)(27)(36)(45), \quad \tilde{\pi} \rho = (26)(48),
\end{split}
\end{align*}
and that $\hat{\sigma}_2 = \sigma_1$, $\hat{\sigma}_4 = \sigma_2$, $\hat{\sigma}_6 = -\sigma_2$ and $\hat{\sigma}_8 = - \sigma_1$. Thus from Lemma \ref{lemma:orbits} it follows that
\begin{align*}
    &\mathbb{E} \Big( \Tr \left( H U^{\sigma_1} H U^{\sigma_2} \right) \overline{\Tr \left( H U^{\sigma_1} H U^{\sigma_2} \right)} \Big) \\
    =& \mathbb{E} \left( \Tr U^{\sigma_1} \Tr U^{\sigma_2} \Tr U^{-\sigma_2} \Tr U^{-\sigma_1} \right) \\
    &+\mathbb{E} \left( \Tr U^{\sigma_1-\sigma_1} \Tr U^{\sigma_2-\sigma_2} \right) \\
    &+\mathbb{E} \left( \Tr U^{\sigma_1-\sigma_2} \Tr U^{\sigma_2-\sigma_1} \right) \\
    =& \begin{cases} 2\sigma_1^2 + n^2 + n^2 & \sigma_1 = \sigma_2 \\ \sigma_1\sigma_2 + n^2 + |\sigma_1 - \sigma_2| & \sigma_1 \neq \sigma_ 2 \end{cases},
\end{align*}
where the last equality holds for large enough $n$ by Theorem \ref{thm:DiaconisShahshahani}.
\end{example}

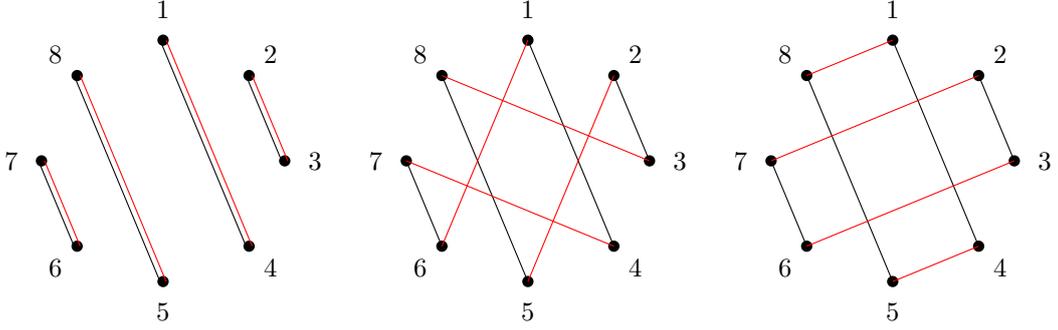
\begin{figure}[H] \label{figure:j = 2}
\center
\begin{tikzpicture}[scale = 0.8]
\node at ({2*cos(0)-6},{2*sin(0)}) [circle,fill,inner sep=1.5pt]{};
\fill ({2.5*cos(0)-6},{2.5*sin(0)}) node[] {$3$};
\node at ({2*cos(45)-6},{2*sin(45)}) [circle,fill,inner sep=1.5pt]{};
\fill ({2.5*cos(45)-6},{2.5*sin(45)}) node[] {$2$};
\node at ({2*cos(90)-6},{2*sin(90)}) [circle,fill,inner sep=1.5pt]{};
\fill ({2.5*cos(90)-6},{2.5*sin(90)}) node[] {$1$};
\node at ({2*cos(135)-6},{2*sin(135)}) [circle,fill,inner sep=1.5pt]{};
\fill ({2.5*cos(135)-6},{2.5*sin(135)}) node[] {$8$};
\node at ({2*cos(180)-6},{2*sin(180)}) [circle,fill,inner sep=1.5pt]{};
\fill ({2.5*cos(180)-6},{2.5*sin(180)}) node[] {$7$};
\node at ({2*cos(225)-6},{2*sin(225)}) [circle,fill,inner sep=1.5pt]{};
\fill ({2.5*cos(225)-6},{2.5*sin(225)}) node[] {$6$};
\node at ({2*cos(270)-6},{2*sin(270)}) [circle,fill,inner sep=1.5pt]{};
\fill ({2.5*cos(270)-6},{2.5*sin(270)}) node[] {$5$};
\node at ({2*cos(315)-6},{2*sin(315)}) [circle,fill,inner sep=1.5pt]{};
\fill ({2.5*cos(315)-6},{2.5*sin(315)}) node[] {$4$};

\draw ({2*cos(90)-6.05},{2*sin(90)}) -- ({2*cos(315)-6.05},{2*sin(315)});
\draw ({2*cos(0)-6.05},{2*sin(0)}) -- ({2*cos(45)-6.05},{2*sin(45)});
\draw ({2*cos(135)-6.05},{2*sin(135)}) -- ({2*cos(270)-6.05},{2*sin(270)});
\draw ({2*cos(180)-6.05},{2*sin(180)}) -- ({2*cos(225)-6.05},{2*sin(225)});

\draw[red] ({2*cos(90)-5.95},{2*sin(90)}) -- ({2*cos(315)-5.95},{2*sin(315)});
\draw[red] ({2*cos(0)-5.95},{2*sin(0)}) -- ({2*cos(45)-5.95},{2*sin(45)});
\draw[red] ({2*cos(135)-5.95},{2*sin(135)}) -- ({2*cos(270)-5.95},{2*sin(270)});
\draw[red] ({2*cos(180)-5.95},{2*sin(180)}) -- ({2*cos(225)-5.95},{2*sin(225)});

\node at ({2*cos(0)},{2*sin(0)}) [circle,fill,inner sep=1.5pt]{};
\fill ({2.5*cos(0)},{2.5*sin(0)}) node[] {$3$};
\node at ({2*cos(45)},{2*sin(45)}) [circle,fill,inner sep=1.5pt]{};
\fill ({2.5*cos(45)},{2.5*sin(45)}) node[] {$2$};
\node at ({2*cos(90)},{2*sin(90)}) [circle,fill,inner sep=1.5pt]{};
\fill ({2.5*cos(90)},{2.5*sin(90)}) node[] {$1$};
\node at ({2*cos(135)},{2*sin(135)}) [circle,fill,inner sep=1.5pt]{};
\fill ({2.5*cos(135)},{2.5*sin(135)}) node[] {$8$};
\node at ({2*cos(180)},{2*sin(180)}) [circle,fill,inner sep=1.5pt]{};
\fill ({2.5*cos(180)},{2.5*sin(180)}) node[] {$7$};
\node at ({2*cos(225)},{2*sin(225)}) [circle,fill,inner sep=1.5pt]{};
\fill ({2.5*cos(225)},{2.5*sin(225)}) node[] {$6$};
\node at ({2*cos(270)},{2*sin(270)}) [circle,fill,inner sep=1.5pt]{};
\fill ({2.5*cos(270)},{2.5*sin(270)}) node[] {$5$};
\node at ({2*cos(315)},{2*sin(315)}) [circle,fill,inner sep=1.5pt]{};
\fill ({2.5*cos(315)},{2.5*sin(315)}) node[] {$4$};

\draw ({2*cos(90)},{2*sin(90)}) -- ({2*cos(315)},{2*sin(315)});
\draw ({2*cos(0)},{2*sin(0)}) -- ({2*cos(45)},{2*sin(45)});
\draw ({2*cos(135)},{2*sin(135)}) -- ({2*cos(270)},{2*sin(270)});
\draw ({2*cos(180)},{2*sin(180)}) -- ({2*cos(225)},{2*sin(225)});

\draw[red] ({2*cos(90)},{2*sin(90)}) -- ({2*cos(225)},{2*sin(225)});
\draw[red] ({2*cos(45)},{2*sin(45)}) -- ({2*cos(270)},{2*sin(270)});
\draw[red] ({2*cos(0)},{2*sin(0)}) -- ({2*cos(135)},{2*sin(135)});
\draw[red] ({2*cos(315)},{2*sin(315)}) -- ({2*cos(180)},{2*sin(180)});

\node at ({2*cos(0)+6},{2*sin(0)}) [circle,fill,inner sep=1.5pt]{};
\fill ({2.5*cos(0)+6},{2.5*sin(0)}) node[] {$3$};
\node at ({2*cos(45)+6},{2*sin(45)}) [circle,fill,inner sep=1.5pt]{};
\fill ({2.5*cos(45)+6},{2.5*sin(45)}) node[] {$2$};
\node at ({2*cos(90)+6},{2*sin(90)}) [circle,fill,inner sep=1.5pt]{};
\fill ({2.5*cos(90)+6},{2.5*sin(90)}) node[] {$1$};
\node at ({2*cos(135)+6},{2*sin(135)}) [circle,fill,inner sep=1.5pt]{};
\fill ({2.5*cos(135)+6},{2.5*sin(135)}) node[] {$8$};
\node at ({2*cos(180)+6},{2*sin(180)}) [circle,fill,inner sep=1.5pt]{};
\fill ({2.5*cos(180)+6},{2.5*sin(180)}) node[] {$7$};
\node at ({2*cos(225)+6},{2*sin(225)}) [circle,fill,inner sep=1.5pt]{};
\fill ({2.5*cos(225)+6},{2.5*sin(225)}) node[] {$6$};
\node at ({2*cos(270)+6},{2*sin(270)}) [circle,fill,inner sep=1.5pt]{};
\fill ({2.5*cos(270)+6},{2.5*sin(270)}) node[] {$5$};
\node at ({2*cos(315)+6},{2*sin(315)}) [circle,fill,inner sep=1.5pt]{};
\fill ({2.5*cos(315)+6},{2.5*sin(315)}) node[] {$4$};

\draw ({2*cos(90)+6},{2*sin(90)}) -- ({2*cos(315)+6},{2*sin(315)});
\draw ({2*cos(0)+6},{2*sin(0)}) -- ({2*cos(45)+6},{2*sin(45)});
\draw ({2*cos(135)+6},{2*sin(135)}) -- ({2*cos(270)+6},{2*sin(270)});
\draw ({2*cos(180)+6},{2*sin(180)}) -- ({2*cos(225)+6},{2*sin(225)});

\draw[red] ({2*cos(90)+6},{2*sin(90)}) -- ({2*cos(135)+6},{2*sin(135)});
\draw[red] ({2*cos(45)+6},{2*sin(45)}) -- ({2*cos(180)+6},{2*sin(180)});
\draw[red] ({2*cos(0)+6},{2*sin(0)}) -- ({2*cos(225)+6},{2*sin(225)});
\draw[red] ({2*cos(315)+6},{2*sin(315)}) -- ({2*cos(270)+6},{2*sin(270)});

\end{tikzpicture}
\caption{The pairing $\rho$ is in black, the three pairings $\tilde{\pi}$ in $\tilde{C}_{8}$ are in red.}
\end{figure}

\begin{theorem}(Diaconis, Shahshahani \cite{DiaconisSha}) \label{thm:DiaconisShahshahani}
    Let $U$ be a Haar-distributed random matrix in $U(n)$ and let $Z_1,...,Z_k$ be i.i.d. standard complex Gaussian random variables. Let $a = (a_1,...,a_k)$ and $b = (b_1,...,b_k)$ with $a_j, b_j \in \mathbb{N}$, and let $n \in \mathbb{N}$ be such that
    \begin{align*}
        \max \left\{ \sum_{j = 1}^k ja_j, \sum_{j = 1}^k jb_j \right\} \leq n.
    \end{align*}
    then
    \begin{align*}
        \mathbb{E} \left( \prod_{j = 1}^k ((\Tr (U^j))^{a_j} \overline{(\Tr (U^j))^{b_j}} \right) = \delta_{ab} \prod_{j = 1}^k j^{a_j} a_j! = \mathbb{E} \left( \prod_{j = 1}^k (\sqrt{j} Z_j)^{a_j} \overline{(\sqrt{j} Z_j)^{b_j}} \right).
    \end{align*}
\end{theorem}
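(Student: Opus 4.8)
The plan is to reduce the statement to a classical identity about power-sum symmetric functions and Haar integration on $U(n)$. First I would note that, writing $e^{i\theta_1},\dots,e^{i\theta_n}$ for the eigenvalues of $U$, one has $\Tr(U^j)=p_j(e^{i\theta_1},\dots,e^{i\theta_n})$, the $j$-th power-sum symmetric polynomial, so $\prod_{j=1}^k(\Tr U^j)^{a_j}=p_{\mu(a)}$, where $\mu(a)$ is the partition having exactly $a_j$ parts equal to $j$ and $|\mu(a)|=\sum_j ja_j$, and likewise for $b$. Thus the first equality becomes
\[
\mathbb{E}_{\mathrm{Haar}}\big(p_{\mu(a)}\,\overline{p_{\mu(b)}}\big)=\delta_{ab}\prod_{j=1}^k j^{a_j}a_j!.
\]

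Next I would expand each power sum in the Schur basis via the Frobenius formula $p_\mu=\sum_{\lambda\vdash|\mu|}\chi^\lambda_\mu\,s_\lambda$, where $s_\lambda$ is the Schur polynomial in the $n$ eigenvalues and $\chi^\lambda_\mu\in\mathbb{Z}$ is the value of the irreducible $S_{|\mu|}$-character $\chi^\lambda$ on the conjugacy class of cycle type $\mu$. The key input is Weyl orthogonality: as long as the number of variables $n$ is at least the degree — precisely the hypothesis $\max\{\sum_j ja_j,\sum_j jb_j\}\le n$, which forces every relevant $\lambda$ to have length $\le n$ — the $s_\lambda$ are exactly the distinct irreducible characters of $U(n)$, whence $\mathbb{E}_{\mathrm{Haar}}(s_\lambda\overline{s_{\lambda'}})=\delta_{\lambda\lambda'}$. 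Multiplying the two Frobenius expansions and integrating gives $\mathbb{E}_{\mathrm{Haar}}(p_{\mu(a)}\overline{p_{\mu(b)}})=\sum_\lambda\chi^\lambda_{\mu(a)}\chi^\lambda_{\mu(b)}$, an empty (hence zero) sum unless $|\mu(a)|=|\mu(b)|$; and when $|\mu(a)|=|\mu(b)|=m$, the column orthogonality relations for the character table of $S_m$ evaluate it to $\delta_{\mu(a),\mu(b)}\,z_{\mu(a)}$, with $z_{\mu(a)}=\prod_j j^{a_j}a_j!$ the order of the centralizer of a permutation of cycle type $\mu(a)$. Since $\mu(a)=\mu(b)$ iff $a=b$, this is exactly the asserted value, so the first equality follows.

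For the second equality I would just compute Gaussian moments directly: for a standard complex Gaussian $Z$ one has $\mathbb{E}(Z^a\overline{Z}^{\,b})=\delta_{ab}\,a!$, so $\mathbb{E}\big((\sqrt j Z_j)^{a_j}\overline{(\sqrt j Z_j)^{b_j}}\big)=j^{a_j}\delta_{a_jb_j}a_j!$, and independence of $Z_1,\dots,Z_k$ makes the product over $j$ equal to $\delta_{ab}\prod_j j^{a_j}a_j!$, matching the left-hand side.

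The main obstacle is the ``stable range'' fact invoked above: that for $n$ at least the degree $m$, the Schur polynomials $s_\lambda$ with $\lambda\vdash m$ restrict to pairwise distinct irreducible $U(n)$-characters, so that Weyl orthogonality applies with no degeneration or cancellation between distinct $\lambda$'s; equivalently, that the correspondence from partitions of $m\le n$ to irreducible $U(n)$-representations is injective with orthonormal characters. Everything else is bookkeeping with the Frobenius formula and the orthogonality relations of the symmetric group. An alternative route that avoids $U(n)$-representation theory is to expand everything through the Weingarten calculus / Schur--Weyl duality, but this is essentially the same computation in disguise and still requires working in the regime $m\le n$ where the Weingarten function is non-degenerate.
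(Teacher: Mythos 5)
Your proposal is correct; the paper itself gives no proof of this statement, citing Diaconis--Shahshahani directly, and your argument (power sums expanded via the Frobenius formula into Schur functions, Weyl orthogonality of the $s_\lambda$ as distinct irreducible $U(n)$-characters in the stable range $\sum_j ja_j,\sum_j jb_j\le n$, then column orthogonality of the $S_m$ character table giving $z_{\mu(a)}=\prod_j j^{a_j}a_j!$, plus the direct complex-Gaussian moment computation) is essentially the classical proof from the cited source. No gaps worth flagging; the only cosmetic point is that for $|\mu(a)|\neq|\mu(b)|$ the vanishing comes from orthogonality of non-isomorphic irreducibles rather than from an ``empty sum,'' which is what you implicitly use anyway.
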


\section*{Acknowledgments}
Our work was supported by ERC Advanced Grant 740900 (LogCorRM). Additionally, Isao Sauzedde was funded by the EPSRC grant EP/W006227/1 during the later stage of the writing process. We are most grateful to Jon Keating and Hugo Falconet for very helpful discussions and suggestions, and to Thierry Lévy for his valuable comments.
\newpage
\bibliographystyle{IEEEtran}
\bibliography{main}

\end{document}